\providecommand{\cal}{\mathcal}
\renewcommand{\Bbb}{\mathbb}
\newenvironment{pf}{\begin{proof}}{\end{proof}}
\newcommand{\Aaa}{{\cal{A}}}
\newcommand{\Bee}{{\cal{B}}}
\newcommand{\Cee}{{\cal{C}}}
\newcommand{\Ef}{{\cal{F}}}
\newcommand{\Gee}{{\cal{G}}}
\newcommand{\Pee}{{\cal{P}}}
\newcommand{\Raa}{{\cal{R}}}
\newcommand{\Nat}{{\Bbb{N}}}
\newcommand{\Err}{{\Bbb{R}}}
\newcommand{\lam}{{\lambda}}
\newcommand{\al}{\alpha}
\renewcommand{\phi}{\varphi}
\renewcommand{\rho}{\varrho}
\newcommand{\loe}{\leqslant}
\newcommand{\goe}{\geqslant}
\newcommand{\subs}{\subseteq}
\newcommand{\nnempty}{\ne\emptyset}
\newcommand{\Int}{\operatorname{int}}
\newcommand{\w}{\operatorname{w}}
\newcommand{\conv}{\operatorname{conv}}
\newcommand{\cf}{\operatorname{cf}}
\newcommand{\liminv}{\varprojlim}
\newcommand{\twoh}{\twoheadrightarrow}
\newcommand{\oraz}{\qquad\text{and}\qquad}
\newcommand{\Es}{{\cal{S}}}
\newcommand{\by}{/}
\newtheorem{tw}{Theorem}[section]
\newtheorem{wn}[tw]{Corollary}
\newtheorem{lm}[tw]{Lemma}
\newtheorem{prop}[tw]{Proposition}
\newtheorem{claim}[tw]{Claim}
\theoremstyle{definition}
\theoremstyle{remark}
\newcommand{\setof}[2]{\{#1\colon #2\}}
\newcommand{\seq}[1]{\langle #1 \rangle}
\newcommand{\sett}[2]{\{#1\}_{#2}}
\newcommand{\pair}[2]{\langle #1, #2 \rangle} % pair
\newcommand{\map}[3]{#1\colon #2 \to #3} % A function
\newcommand{\fra}{Fra\"iss\'e}
\newcommand{\jon}{J\'onsson}
\newcommand{\frajon}{\fra-\jon}
\newcommand{\fK}{{\mathfrak{K}}}
\newcommand{\fL}{{\mathfrak{L}}}
\newcommand{\cmp}{\circ} % composition!!!
\newcommand{\komp}{\ensuremath{\mathfrak C\mathfrak o\mathfrak m\mathfrak p}} % the category of compacta
\newcommand{\wek}[1]{{\vec{#1}}}
\newcommand{\parov}{Paro\-vi\-\v{c}en\-ko}
\newcommand{\qmap}[3]{#1\colon #2 \twoheadrightarrow #3} % A quotient map
\title{\parov\ spaces with structures}
\author{
{\sc Wies{\l}aw Kubi\'s}
\footnote{Research supported by the GA\v{C}R grant P
201/12/0290 (Czech Republic)}\\
{\small Mathematical Institute, Academy of Sciences of the Czech Republic}\\
{\small Prague, CZECH REPUBLIC}\\
{\small\texttt{kubis@math.cas.cz}}
\and
{\sc Andrzej Kucharski}\\
{\small Institute of Mathematics, University of Silesia}\\
{\small Katowice, POLAND}\\
{\small\texttt{akuchar@math.us.edu.pl}}
\and
{\sc S\l awomir Turek}\\
{\small Institute of Mathematics, Jan Kochanowski University}\\
{\small Kielce, POLAND}\\
{\small\texttt{sturek@ujk.edu.pl}}
}
\begin{document}

\maketitle

\begin{abstract}
We study an analogue of the \parov\ property in categories of compact spaces with additional structures.
In particular, we present an internal characterization of this property in the class of compact median spaces.
\end{abstract}

\vspace{3mm}
\noindent
{\bf MSC(2010)}
Primary:
54D30; %Compactness
Secondary:
54B25, %Sums, inverse limits
54B30, %Categorical methods
18B30, %Categories of topological spaces and continuous mappings

\noindent
{\bf Keywords:} Structured compact space, normally supercompact space, median space.

\tableofcontents

\section{Introduction}

A compact space $K$ is said to be \emph{\parov} if for every continuous surjections $\map f AB$ and $\map g KB$, where $A$, $B$ are compact metrizable spaces, there exists a continuous surjection $\map h K A$ satisfying $f \cmp h = g$.
This definition naturally generalizes to higher weights, leading to the notion of a $\kappa$-\parov\ space, where $\kappa$ is a cardinal bounding the weights of the spaces $A$, $B$.
It is also natural to consider the same property in other categories.
We shall concentrate on categories of \emph{structured compact spaces}, defined below.
We shall present a general (and rather standard) result on the existence of structured \parov\ spaces.
Next, we discuss one particular example: compact median spaces.
The main result is an internal characterization of \parov\ objects in this category.

The concept of a \parov\ space was introduced by van Douwen and van Mill~\cite{vMillvDouwen}, where the authors actually used its internal properties, instead of the external category-theoretic condition stated above.
Originally, \parov~\cite{Parov} showed that every compact space of weight $\loe \aleph_1$ is a continuous image of the \v Cech-Stone remainder $\Nat^*$ of the natural numbers; he also proved
 that, assuming the Continuum Hypothesis, this is the unique \parov\ compact space of weight $\mathfrak c$.

Later on, Negrepontis~\cite{Negre} generalized \parov's results to higher cardinals; some of his questions were several years later answered by Dow~\cite{AD}.
Finally, van Douwen and van Mill~\cite{vDvM} showed that the Continuum Hypothesis is equivalent to the existence of a unique \parov\ space of weight $\aleph_1$.
At the same time, B\l aszczyk and Szyma\'nski~\cite{BlSz} gave a simpler proof of \parov\ theorem, also finding another characterization of $\Nat^*$, under the Continuum Hypothesis.
Some further simplifications were subsequently found by Engelking~\cite{Engelking}.
Nowadays, \parov\ spaces (dually, \parov\ Boolean algebras) belong to the folklore of topology and set theory.

The aim of this note is to extend this concept to categories of compact spaces with structures, emphasizing one particular case: compact median spaces. This class was studied, in the context of \parov\ theorem, by van Mill~\cite{vanMill}, who showed that none of the obvious subspaces of the superextension of the natural numbers (that plays the role of the \v Cech-Stone compactification of $\Nat$ in the category of compact median spaces) has the \parov\ property.
Our main results are: the existence of \parov\ median spaces and an internal characterization of such spaces. We finally show that the Continuum Hypothesis is equivalent to the uniqueness of a \parov\ median space of weight $2^{\aleph_0}$, an analogue of the result of van Douwen and van Mill~\cite{vMillvDouwen}.

All topological spaces under consideration are assumed to be at least Hausdorff.
We shall use standard terminology concerning general topology and category theory.

\section{Structured compact spaces}

In this section we present category-theoretic approach to compact spaces with additional structures.
We first explain this concept informally, the precise definition is given below.
Roughly speaking, compact spaces with structures are objects of the form $\pair K a$, where $a$ is a \emph{structure} on $K$, so far with no particular meaning.
Given two compact spaces with structures $\pair K a$ and $\pair L b$, we would like to consider continuous mappings $\map f K L$ that preserve the structures.
If the structures are not clearly defined, the only thing we can say is that structure-preserving maps are \emph{some} particular continuous maps.
It is natural to require that identities should be structure-preserving and compositions of structure-preserving maps should be structure-preserving.
Here we arrive at the proper concept: Structured compact spaces can be defined by a faithful functor from some (possibly very abstract) category $\fK$ into the category of compact spaces $\komp$.

To be more precise, by $\komp$ we shall understand the category whose objects are nonempty compact spaces and arrows are continuous surjections (i.e. quotient maps).
Restricting attention to surjective maps is convenient for our applications, especially when dealing with inverse sequences, although there is no problem to consider arbitrary continuous maps.

Recall that a functor $\map F \fK \fL$ is \emph{faithful} if it is one-to-one on hom-sets, namely, given morphisms $\map f a b$, $\map g a b$, where $a,b$ are objects of $\fK$, the equality $F(f) = F(g)$ implies $f = g$.

Fix a faithful covariant functor $\map \Phi \fK \komp$.
We can look at the graph of $\Phi$ as a category whose objects are pairs of the form $\pair K a$, where $a$ is a $\fK$-object such that $K = \Phi(a)$ and an arrow from $\pair K a$ to $\pair L b$ is a $\fK$-arrow $\map f a b$, identified by $\Phi$ with a continuous surjection $\map {\Phi(f)}K L$.
The fact that $\Phi$ is faithful simply means that $f$ is uniquely determined by $\Phi(f)$.
In other words, we can call $a$ and $b$ \emph{structures} on $K$ and $L$, respectively, and say that $\Phi(f)$ is a \emph{structure-preserving} continuous map.
We shall talk about \emph{$\Phi$-structured} compact spaces, having in mind objects of the form $\pair K a$, where $a$ is a $\fK$-object such that $K = \Phi(a)$.
For simplicity, we shall say that $\map g {\pair K a}{\pair L b}$ is \emph{$\Phi$-preserving} if $g = \Phi(f)$ for a (necessarily unique) $\fK$-arrow $\map f a b$.

Summarizing, a category of structured compact spaces is given just by a faithful functor into the category of compact spaces.
This clearly covers all algebraic structures as well as many more abstract ones.

For example, consider the category $\fK$ of all Banach spaces with linear isometric embeddings. Given a Banach space $X$, let $\Phi(X)$ be the dual unit ball of $X$ endowed with the weak-star topology.
It is well-known that this is a faithful contravariant\footnote{This is not an obstacle, because one can always replace the category $\fK$ by the opposite one.} functor into $\komp$.
Thus, dual balls of Banach spaces can be viewed as compact spaces with structures, where the structure is actually a Banach space, even though all the information is ``contained" in its dual unit ball.

All the considerations above belong to the topic of concrete categories, the only formal difference is that we deal with functors into the category of compact spaces instead of the category of sets.

\subsection{\parov\ spaces}

{From} now on we fix a faithful covariant functor $\map \Phi \fK \komp$.
We shall work in the category of $\Phi$-structured compact spaces.
It is natural to define the \emph{weight} of a $\Phi$-structured compact space $\pair K a$ to be just the weight of $K$.

{From} now on, we fix a regular cardinal $\kappa \goe \aleph_0$.
We shall say that a $\Phi$-structured compact space $\pair P \pi$ is \emph{$\kappa$-\parov} if for every $\Phi$-preserving surjection $\map f {\pair L b}{\pair K a}$ of spaces of weight $<\kappa$, for every $\Phi$-preserving surjective map $\map p {\pair P \pi}{\pair K a}$ there exists a $\Phi$-preserving surjection $\map q {\pair P \pi}{\pair L b}$ such that $p =  f \cmp q$.

In order to show the existence and good properties of \parov\ spaces it is necessary to make some natural assumptions on the functor $\Phi$.

Namely, from now on we shall assume that $\Phi$ satisfies the following conditions:
\begin{enumerate}
	\item[(A)] Given $\Phi$-preserving surjections $\map f {\pair X a}{\pair Z c}$, $\map g {\pair Y b}{\pair Z c}$ with $\w(X)<\kappa$ and $\w(Y)<\kappa$, there exist $\Phi$-preserving surjective maps $\map {f'} {\pair W d}{\pair X a}$ and $\map {g'} {\pair W d}{\pair Y b}$ such that $\w(W) < \kappa$ and $f \cmp f' = g \cmp g'$.
	\item[(B)] There exist, up to isomorphism, at most $\kappa^{<\kappa}$ many $\Phi$-structured compact spaces of weight $<\kappa$.
	\item[(C)] Given an infinite cardinal $\lam$, every inverse sequence of $\Phi$-preserving surjections between spaces of weight $<\lam$ has a limit in the category of $\Phi$-preserving surjections; if the length of this sequence is $<\cf(\lam)$ then the weight of the limit is $<\lam$.
	\item[(D)] Given an uncountable cardinal $\lam$, every $\Phi$-structured compact space of weight $\lam\goe\kappa$ is the limit of a continuous inverse sequence of length $\loe\lam$ consisting of $\Phi$-structured compact spaces of weight $<\lam$ and with $\Phi$-preserving surjections.
	\item[(F)] Given a regular cardinal $\lam > \aleph_0$, given an inverse sequence $\wek X = \seq{X_\al, p^\beta_\al, \lam}$ of $\Phi$-preserving surjective maps between spaces of weight $<\lam$, given a $\Phi$-preserving surjection $\map f {\lim \wek X} Y$, there exists $\al < \lam$ and a $\Phi$-preserving surjection $\map {f'}{X_\al}Y$ such that $f = f' \cmp p_\al$, where $p_\al$ is the canonical projection onto $X_\al$.
	\item[(T)] There exists a $\Phi$-structured compact space $\pair E e$ such that every $\Phi$-structured compact space has a $\Phi$-preserving surjection onto $E$.
\end{enumerate}
It should be clear that the identity functor of $\komp$ satisfies  conditions (A)--(T). Other natural functors, coming from algebraic structures on compact spaces, are described in Subsection~\ref{SubsERGhoea} below.
In conditions (C), (D) and (F) $\lam$ is an arbitrary infinite cardinal.
Condition (A) will be sometimes called the \emph{reversed amalgamation property}.
Condition (F) will be called the \emph{factorization property}.
Condition (T) (the existence of a weakly terminal object) is assumed for the sake of convenience only, it could be replaced by a weaker one saying that for every $\Phi$-structured compact spaces $X$, $Y$ there exists a $\Phi$-structured compact $Z$ and $\Phi$-preserving surjective maps $\map f Z X$, $\map g Z Y$.

The next fact belongs to the folklore, we sketch its proof only for completeness.

\begin{lm}\label{LemKKardinlty}
Let $K$ be a compact space of weight $\loe \kappa^{<\kappa}$ and let $L$ be a compact space of weight $<\kappa$.
Then the set of all surjective maps from $K$ onto $L$ has cardinality $\loe \kappa^{<\kappa}$.
\end{lm}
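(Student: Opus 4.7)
The plan is to reduce the counting of continuous surjections $K \twoheadrightarrow L$ to counting continuous real-valued functions on $K$, and then to invoke the standard density estimate for the Banach space $C(K)$.

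First, since $L$ is a compact Hausdorff space of weight $\lam := \w(L) < \kappa$, it embeds into the cube $[0,1]^{\lam}$. Composing with the coordinate projections, any continuous map $\map f K L$ is completely determined by the $\lam$-tuple $(\pr_\al \cmp f)_{\al < \lam}$ of continuous functions from $K$ into $[0,1]$. Hence the set of continuous surjections from $K$ onto $L$ has cardinality at most $|C(K,[0,1])|^{\lam}$.

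Second, I would invoke the classical fact that for an infinite compact Hausdorff space $K$ the Banach space $C(K)$ has density character equal to $\w(K)$, and the closed subset $C(K,[0,1])$ inherits the same bound on its density. Since any complete metric space $X$ has cardinality at most $d(X)^{\aleph_0}$ (each point is the limit of a Cauchy sequence chosen from a fixed dense set), one obtains
$$|C(K,[0,1])| \loe \w(K)^{\aleph_0} \loe (\kappa^{<\kappa})^{\aleph_0}.$$

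Finally, the argument is completed by a short cardinal arithmetic computation. Assuming $\kappa > \aleph_0$, regularity of $\kappa$ gives $(\kappa^{<\kappa})^{\aleph_0} = \sup_{\mu<\kappa} \kappa^{\mu \cdot \aleph_0} = \kappa^{<\kappa}$, and similarly $(\kappa^{<\kappa})^{\lam} = \kappa^{<\kappa}$ because $\lam < \kappa$. The degenerate case $\kappa = \aleph_0$ has to be checked separately: then $L$ is finite and $K$ is metrizable, so continuous surjections $K \twoheadrightarrow L$ correspond to finite clopen partitions of $K$ labelled by $L$, of which there are at most countably many, matching the bound $\kappa^{<\kappa} = \aleph_0$. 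There is no substantive obstacle in this argument; the only delicate point is the cardinal arithmetic in the last step, where the regularity hypothesis on $\kappa$ is essential.
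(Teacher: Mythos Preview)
Your argument is correct and reaches the same bound by a genuinely different route. The paper does not go through $C(K)$; instead it fixes a closed basis $\Bee$ of $L$ with $|\Bee|=\mu<\kappa$ consisting of closed $G_\delta$ sets, observes that a continuous surjection $f\colon K\to L$ is determined by the family $\{f^{-1}[B]:B\in\Bee\}$, and then counts such families inside the lattice $Z(K)$ of all closed $G_\delta$ subsets of $K$, using $|Z(K)|\loe(\kappa^{<\kappa})^{\aleph_0}=\kappa^{<\kappa}$. Both proofs therefore hinge on the same cardinal identity $(\kappa^{<\kappa})^{\mu}=\kappa^{<\kappa}$ for $\mu<\kappa$. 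Your version trades the lattice $Z(K)$ for the Banach space $C(K)$ together with the classical fact $d(C(K))=\w(K)$; this is perhaps more natural for an analyst, while the paper's version is a little more self-contained and stays purely within point-set topology.

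One minor remark on presentation: the step $(\kappa^{<\kappa})^{\aleph_0}=\sup_{\mu<\kappa}\kappa^{\mu\cdot\aleph_0}$ is not an instance of exponentiation commuting with suprema (which fails in general); it holds here precisely because regularity of $\kappa>\aleph_0$ forces every $\omega$-sequence from ${}^{<\kappa}\kappa$ to lie in some ${}^{\mu}\kappa$ with $\mu<\kappa$. You are using this correctly, but it would be worth saying so explicitly rather than writing the equality as if it were formal arithmetic.
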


\begin{pf}
Let $Z(K)$ denote the lattice of all closed $G_\delta$ subsets of $K$.
Then $|Z(K)| \loe (\kappa^{<\kappa})^{\aleph_0} = \kappa^{<\kappa}$.
Fix a closed basis $\Bee$ in $L$ of cardinality $\mu < \kappa$ that consists of closed $G_\delta$ sets.
Every surjective map $\map f K L$ is uniquely determined by its inverse restricted to $\Bee$, which is an embedding of $\Bee$ into the lattice $Z(K)$.
Finally, $Z(K)$ has at most $(\kappa^{<\kappa})^\mu = \kappa^{<\kappa}$ subsets of cardinality $\loe \mu$.
\end{pf}

\begin{lm}\label{LemKwadratJeden}
Let $\pair K a$ be a $\Phi$-structured compact space of weight $\loe \kappa^{<\kappa}$.
Then there exists a $\Phi$-structured compact space $\pair L b$ of weight $\loe \kappa^{<\kappa}$, together with a $\Phi$-preserving surjective map $\qmap h {\pair L b}{\pair K a}$, satisfying the following condition:
\begin{enumerate}
	\item[(L)] Given $\Phi$-preserving surjections $\qmap p {\pair K a}{\pair M c}$ and $\qmap f {\pair N d}{\pair M c}$ such that $\w(N) < \kappa$, there exists a $\Phi$-preserving surjection $\qmap q {\pair L b}{\pair N d}$ such that $p \cmp h = f \cmp q$.
In other words, the diagram 
$$\xymatrix{
L \ar@{>>}[r]^q \ar@{>>}[d]_h & N \ar@{>>}[d]^f \\
K \ar@{>>}[r]_p & M
}$$
is commutative.
\end{enumerate}
\end{lm}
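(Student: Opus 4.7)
The plan is to construct $\pair L b$ as the inverse limit of a sequence $\seq{\pair{L_\xi}{b_\xi}, h^\eta_\xi : \xi \loe \eta < \mu}$ of $\Phi$-structured compacta of weight $\loe \kappa^{<\kappa}$, starting from $\pair{L_0}{b_0} = \pair K a$, with $\mu \loe \kappa^{<\kappa}$. First I would enumerate the ``test pairs'' entering condition~(L): by Lemma~\ref{LemKKardinlty}, for every compact space $M$ of weight $<\kappa$ there are at most $\kappa^{<\kappa}$ surjections $K \twoheadrightarrow M$; condition~(B) bounds the isomorphism classes of $\Phi$-structured compacta of weight $<\kappa$ by $\kappa^{<\kappa}$; and Lemma~\ref{LemKKardinlty} again bounds the surjections $\pair N d \twoheadrightarrow \pair M c$ between weight-$<\kappa$ spaces. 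Hence there are at most $\mu := \kappa^{<\kappa}$ such pairs $(p,f)$, which I enumerate as $\{(p_\xi, f_\xi) : \xi < \mu\}$. Limit steps are handled by condition~(C) with $\lambda = (\kappa^{<\kappa})^+$: since $\mu < \cf(\lambda)$ and every $L_\xi$ has weight $<\lambda$, the limit has weight $<\lambda$, i.e.\ $\loe\kappa^{<\kappa}$.

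The crux is the successor step. Given $h^\xi_0 : L_\xi \twoheadrightarrow K$, I apply~(D) iteratively to represent $\pair{L_\xi}{b_\xi}$ as a continuous inverse limit of $\Phi$-structured compacta of weight $<\kappa$, and then use the factorization property~(F) to write $p_\xi \cmp h^\xi_0 = r_\xi \cmp \rho_\xi$ with $\rho_\xi : L_\xi \twoheadrightarrow K'_\xi$, $r_\xi : K'_\xi \twoheadrightarrow M_\xi$, and $\w(K'_\xi) < \kappa$. Reversed amalgamation~(A) applied to $r_\xi$ and $f_\xi$ then produces $\pair{W_\xi}{e_\xi}$ of weight $<\kappa$ together with $\Phi$-preserving surjections $r'_\xi : W_\xi \twoheadrightarrow K'_\xi$ and $q'_\xi : W_\xi \twoheadrightarrow N_\xi$ satisfying $r_\xi \cmp r'_\xi = f_\xi \cmp q'_\xi$. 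I would then define $\pair{L_{\xi+1}}{b_{\xi+1}}$ to be the fibered product of $\rho_\xi$ and $r'_\xi$ over $K'_\xi$ in the $\Phi$-structured category, with projections $h^{\xi+1}_\xi : L_{\xi+1} \twoheadrightarrow L_\xi$ and $s_\xi : L_{\xi+1} \twoheadrightarrow W_\xi$; the composite $q_\xi := q'_\xi \cmp s_\xi$ then witnesses $p_\xi \cmp h^{\xi+1}_0 = f_\xi \cmp q_\xi$.

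The main obstacle is realizing that fibered product inside the category of $\Phi$-structured compact spaces: axioms (A)--(T) do not hand us pullbacks directly. The remedy is to approximate $\pair{L_\xi}{b_\xi}$ by the weight-$<\kappa$ quotients coming from its (D)-representation, amalgamate each such quotient with $\pair{W_\xi}{e_\xi}$ over $K'_\xi$ via~(A), and reassemble them through the inverse limit provided by~(C); this yields a $\Phi$-structured $\pair{L_{\xi+1}}{b_{\xi+1}}$ of weight $\loe\kappa^{<\kappa}$ endowed with the required two surjections. Once the whole construction is complete, letting $\pair L b$ be the inverse limit of the sequence gives the desired space: $\w(L) \loe \kappa^{<\kappa}$ by~(C), the projection $h : L \twoheadrightarrow K$ is $\Phi$-preserving, and property~(L) for any test pair $(p,f)$ follows by locating it as $(p_\xi,f_\xi)$ and composing $q_\xi$ with the canonical projection $L \twoheadrightarrow L_{\xi+1}$.
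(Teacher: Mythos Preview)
Your overall architecture --- enumerate the relevant pairs $(p,f)$ using (B) and Lemma~\ref{LemKKardinlty}, then build a $\kappa^{<\kappa}$-long inverse sequence handling one pair per successor step, with (C) at limits --- is exactly the paper's approach.

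Where you diverge is the successor step. The paper simply invokes (A) for the arrows $q_\alpha \cmp p_0^\alpha \colon K_\alpha \twoheadrightarrow M_\alpha$ and $f_\alpha \colon N_\alpha \twoheadrightarrow M_\alpha$, obtaining $K_{\alpha+1}$ in one move; it does not worry that $\w(K_\alpha)$ may exceed $\kappa$, i.e.\ it tacitly uses reversed amalgamation without the weight restriction (harmless in the intended examples, where pullbacks exist). You instead insist on the literal hypothesis of (A) and therefore interpose a factorization through a weight-$<\kappa$ space $K'_\xi$ followed by a reassembly of the fibered product from weight-$<\kappa$ pieces.

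The gap is in that reassembly. Condition (A) does not return a canonical amalgam, so to obtain a coherent inverse system $\{Z_\gamma\}$ lying over the $X_\gamma$'s and over $W_\xi$ you must build the $Z_\gamma$'s inductively, amalgamating $Z_\gamma$ with $X_{\gamma+1}$ over $X_\gamma$ at each successor step. But once $\gamma$ reaches a limit of cofinality $\goe\kappa$, condition (C) no longer guarantees $\w(Z_\gamma)<\kappa$, and then (A) cannot be invoked at stage $\gamma+1$: you are back to the very obstacle you set out to avoid. Your iterated use of (D) to produce weight-$<\kappa$ levels for $L_\xi$ is likewise only a gesture --- (D) yields quotients of weight $<\w(L_\xi)$, not $<\kappa$, and linearizing the tree of iterated decompositions into a single ordinal-indexed continuous sequence with weight-$<\kappa$ terms needs its own argument. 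So either accept the paper's shortcut (apply amalgamation disregarding the weight bound on one leg), or be prepared to prove a separate lemma showing that (A) together with (C), (D), (F) yields amalgamation with one leg of arbitrary weight; your current remedy does not close the loop.
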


\begin{pf}
Let us consider the family $\Ef$ consisting of all $(M,N,q,f)$ where $M,N$ are $\Phi$-structured compact spaces, $w(N)<\kappa$ and $\qmap q K M$, $\qmap f N M$ are $\Phi$-preserving surjections.
As $w(K)\le \kappa^{<\kappa}$ and $w(N)<\kappa$ for each $(M,N,q,f)\in \Ef$, by condition (B) and Lemma~\ref{LemKKardinlty} we conclude that $|\Ef|\loe (\kappa^{<\kappa})^{<\kappa}=\kappa^{<\kappa}$.
Fix an enumeration $\Ef = \setof{(M_\alpha,N_\alpha,q_\alpha,f_\alpha)}{ \alpha<\kappa^{<\kappa}}$.

We define inductively an inverse sequence $\Es = \seq{K_\alpha, p_\alpha^\beta; \alpha<\kappa^{<\kappa}}$ of 
$\Phi$-structured compact spaces whose bonding maps are $\Phi$-preserving surjections.
We start with $K_0 = K$ and at each limit stage we use condition (C).
Given $\al<\kappa^{<\kappa}$, we require that there exists a $\Phi$-preserving surjection $\qmap{r_\alpha} {K_{\alpha+1}}{N_\alpha}$ such that the following diagram
$$\xymatrix{
K_0\ar@{>>}[d]_{q_\alpha} & K_\alpha\ar@{>>}[l]_{p^\alpha_0} & K_{\alpha+1}\ar@{>>}[l]_{p_\alpha^{\alpha+1}}\ar@{>>}[ld]^{r_\alpha}\\
M_\alpha & N_\alpha\ar@{>>}[l]^{f_\alpha} & }$$ 
commutes.
This is possible, because of the reversed amalgamation (condition (A)) for the mappings $q_\al \cmp p_0^\al$, $f_\al$.

Finally, we let $L$ to be the limit of this sequence in the category of $\Phi$-structured compact spaces, which exists by condition (C).
The same condition applied to $\lam = (\kappa^{<\kappa})^+$ says that $\w(L) \loe \kappa^{<\kappa}$.
Finally, by the construction it is clear that condition (L) holds.
\end{pf}

\begin{tw}\label{ThmtwParuf}
There exists a $\kappa$-\parov\ $\Phi$-structured compact space of weight $\loe \kappa^{<\kappa}$.
\end{tw}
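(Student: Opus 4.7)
I would produce $P$ as the limit of a continuous inverse sequence $\wek{P} = \seq{P_\al, \pi_\al^\beta; \al < \beta < \mu}$ of $\Phi$-structured compact spaces of weight $\loe \mu := \kappa^{<\kappa}$, constructed by transfinite recursion of length $\mu$ in which every successor bonding map is the one supplied by Lemma~\ref{LemKwadratJeden}. The underlying idea is that Lemma~\ref{LemKwadratJeden} solves, in a single step, \emph{all} lifting problems based at a given level, and the factorization property (F) then reduces any lifting problem for $P$ itself to one based at a finite level $P_\al$.

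Concretely, I would set $P_0 := E$ from condition (T); note $w(E) < \kappa \loe \mu$, since by (B) there exists at least one $\Phi$-structured compact space of weight $<\kappa$, which must surject onto $E$. At a successor stage $\al+1 < \mu$ I invoke Lemma~\ref{LemKwadratJeden} on $P_\al$ to obtain $P_{\al+1}$ of weight $\loe \mu$ together with a $\Phi$-preserving surjection $\pi_\al^{\al+1} \colon P_{\al+1} \twoheadrightarrow P_\al$ satisfying property (L). At limit stages I take inverse limits via (C). Applying (C) once more with $\lam = \mu^+$ to the whole sequence (length $\mu < \mu^+ = \cf(\mu^+)$) yields $w(P) \loe \mu$ for the final limit $P$.

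To verify the $\kappa$-\parov\ property, take a $\Phi$-preserving surjection $f \colon \pair L b \twoheadrightarrow \pair K a$ between spaces of weight $<\kappa$ and a $\Phi$-preserving surjection $p \colon P \twoheadrightarrow \pair K a$. By (F) applied with $\lam = \mu$ to the sequence $\wek P$, there exist $\al < \mu$ and a $\Phi$-preserving surjection $p_\al \colon P_\al \twoheadrightarrow K$ with $p = p_\al \cmp \pi_\al$, where $\pi_\al \colon P \twoheadrightarrow P_\al$ is the canonical projection. Property (L) for the bonding map $\pi_\al^{\al+1}$, applied to the quadruple $(K, L, p_\al, f)$, supplies a $\Phi$-preserving surjection $q' \colon P_{\al+1} \twoheadrightarrow L$ with $p_\al \cmp \pi_\al^{\al+1} = f \cmp q'$. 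Setting $q := q' \cmp \pi_{\al+1}$ then gives
$$f \cmp q \;=\; f \cmp q' \cmp \pi_{\al+1} \;=\; p_\al \cmp \pi_\al^{\al+1} \cmp \pi_{\al+1} \;=\; p_\al \cmp \pi_\al \;=\; p,$$
as required.

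The main technical point to watch is the cardinal arithmetic around (F), which requires a regular length $>\aleph_0$ and all $w(P_\al)$ strictly below the chosen $\lam$. This is clean when $\mu$ is regular and one arranges the construction so that each $P_\al$ has weight strictly less than $\mu$ (easily done by running the internal bookkeeping of Lemma~\ref{LemKwadratJeden} with length $|\al|\cdot\kappa < \mu$ at stage $\al$). Should $\mu$ be singular, one would instead work along a regular cofinal subchain of $\mu$, or run the recursion at length $\mu^+$ and verify the weight bound separately; the Fraïssé-style bookkeeping underlying the verification is unchanged.
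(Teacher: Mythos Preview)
Your proof follows essentially the same route as the paper's: iterate Lemma~\ref{LemKwadratJeden} along a continuous inverse sequence starting from the weakly terminal object of (T), take the limit via (C), and invoke the factorization property (F) to reduce any lifting problem for $P$ to one based at some $P_\al$, which is then solved by property (L) at stage $\al+1$. The only difference is the chosen length of the recursion---the paper runs it at length $\kappa^+$ rather than your $\mu = \kappa^{<\kappa}$, which is automatically regular and so sidesteps the case distinction you flag at the end.
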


\begin{pf}
We construct a $\Phi$-continuous inverse sequence of length $\kappa^+$, starting from a weakly terminal object given by condition (T).
At each successor step we use Lemma~\ref{LemKwadratJeden}, while at limit steps we use condition (C).
Finally, the factorization property of $\Phi$ (condition (F) applied to $\lam = \kappa^+$) shows that the limit of this sequence is $\kappa$-\parov.
\end{pf}

The following statement is an adaptation of well-known results in \frajon\ theory, see e.g. \cite{DrGoe92} or \cite{Kub_fra}.

\begin{tw}\label{ThmgenFraisseParow}
Let $P$ be a $\kappa$-\parov\ $\Phi$-structured compact space, where $\kappa > \aleph_0$.
Then:
\begin{enumerate}
	\item[(1)] For every $\Phi$-structured compact space $K$ of weight $\le\kappa$ there exists a $\Phi$-preserving surjection $\qmap p P K$.
	\item[(2)] If $\kappa = w(P)$ is regular then for every $\Phi$-preserving surjections $f,g\colon P\twoheadrightarrow K$, where $w(K)<\kappa$, there is a $\Phi$-isomorphism $h\colon P\to P$ such that $f=g\cmp h$.
	\item[(3)] If $\kappa$ is regular then there exists at most one (up to isomorphism) $\kappa$-\parov\ $\Phi$-structured compact space of weight $\kappa$.
\end{enumerate}
\end{tw}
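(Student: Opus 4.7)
My plan is to prove (1)--(3) in order: (1) by an inductive lift and (2)--(3) by back-and-forth arguments adapting the standard \frajon\ uniqueness template to this categorical setting.

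For (1), fix $K$ with $w(K) \loe \kappa$. By condition (D), write $K = \liminv_{\alpha<\mu} K_\alpha$ as a continuous inverse sequence of $\Phi$-preserving surjections of length $\mu \loe \kappa$ with $w(K_\alpha) < \kappa$. Construct $\Phi$-preserving surjections $h_\alpha\colon P \twoheadrightarrow K_\alpha$ by transfinite induction so that $p_\alpha^\beta \cmp h_\beta = h_\alpha$ for $\alpha < \beta < \mu$. For $h_0$: take surjections $P \twoheadrightarrow E$ and $K_0 \twoheadrightarrow E$ from condition (T) (first passing to a small quotient of $E$ via (D) and (F) in case $w(E) \goe \kappa$) and apply the \parov\ property. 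At successor steps, the \parov\ property applies directly to $p_\alpha^{\alpha+1}\colon K_{\alpha+1} \twoheadrightarrow K_\alpha$ and $h_\alpha$. At limit steps, condition (C) produces the induced map into the inverse limit; its image is compact and projects onto every $K_\alpha$, hence equals $K_\beta$. Then $h = \lim_\alpha h_\alpha$ is the desired surjection.

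For (2), with $\kappa = w(P)$ regular, apply (D) to $P$: write $P = \liminv_{\alpha<\kappa} P_\alpha$ with $w(P_\alpha) < \kappa$ and projections $\pi_\alpha \colon P \twoheadrightarrow P_\alpha$. Perform a back-and-forth in the target: build a continuous inverse sequence $\seq{L_\alpha; r_\alpha^\beta; \kappa}$ of $\Phi$-structured spaces of weight $<\kappa$ with $L_0 = K$, together with two coherent families of $\Phi$-preserving surjections $f_\alpha, g_\alpha\colon P \twoheadrightarrow L_\alpha$ refining $f_0 = f$ and $g_0 = g$. Interleave the stages: at each odd successor stage, arrange that the next unprocessed $\pi_\alpha$ factors through $f_{\alpha+1}$; at each even successor, through $g_{\alpha+1}$. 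The key successor move (say for $f$): use condition (F) to factor $f_\beta$ through some $\pi_\gamma$; enlarge $\gamma$ so it also absorbs $\pi_\alpha$; apply the reversed amalgamation (A) to produce $L_{\beta+1}$ of weight $<\kappa$ with surjections onto both $L_\beta$ and $P_\alpha$; finally invoke the \parov\ property of $P$ to lift the enlarged tower to a new $f_{\beta+1}$. Limit stages use (C). Because every $\pi_\alpha$ eventually factors through some $f_\beta$ and some $g_\beta$, the limit maps $\tilde f, \tilde g\colon P \to \liminv_\alpha L_\alpha$ separate points of $P$ and are therefore $\Phi$-isomorphisms. Setting $h := \tilde g^{-1} \cmp \tilde f$ yields a $\Phi$-automorphism of $P$, and projecting to $L_0 = K$ gives $g \cmp h = f$.

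For (3), given two $\kappa$-\parov\ $\Phi$-structured spaces $P, P'$ of weight $\kappa$, run the symmetric back-and-forth between $P$ and $P'$: build a continuous inverse sequence $\seq{L_\alpha; r_\alpha^\beta; \kappa}$ of small-weight spaces with $L_0$ a small common target (from (T)), together with coherent surjections $p_\alpha\colon P \twoheadrightarrow L_\alpha$ and $q_\alpha\colon P' \twoheadrightarrow L_\alpha$, interleaving so that at each stage one uses the \parov\ property of one side to handle the next projection from its (D)-decomposition. The limit maps are $\Phi$-isomorphisms by the same argument as in (2), and their composite is the required isomorphism $P \iso P'$.

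The main obstacle, I expect, will be the bookkeeping of the back-and-forth in (2) and (3): enumerating the $\kappa$-many projections from the (D)-decompositions so that each is addressed at some stage (relying on regularity of $\kappa$), and verifying that each successor construction of $L_{\beta+1}$ yields a $\Phi$-structured common cover of the two maps in play --- this is precisely where conditions (A), (F), (C), and the \parov\ property of $P$ (and $P'$) must be interwoven. Once this bookkeeping is in place, the arguments follow the standard \frajon\ template.
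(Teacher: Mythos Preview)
Your approach is essentially that of the paper. For (1) the paper does exactly your inductive lift along a (D)-decomposition of $K$, starting from the weakly terminal object $E$, using the \parov\ property at successors and continuity at limits. For (2) and (3) the paper likewise runs a back-and-forth over (D)-decompositions, but organizes it more directly: it first isolates an \emph{absorption} property (P) of the decomposition $\seq{X_\alpha}$ of $P$ --- every small cover $K \twoheadrightarrow X_\alpha$ is absorbed at a later stage via some $X_\beta \twoheadrightarrow K$ factoring $p^\beta_\alpha$ (this is \parov\ plus (F)) --- and then zig-zags between the two given decomposition sequences themselves, so that the isomorphism arises categorically as the common limit of two cofinal subsequences of a single interleaved sequence. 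You instead build an auxiliary target sequence $\seq{L_\alpha}$ and argue that the induced $\tilde f, \tilde g$ are bijective, hence $\Phi$-isomorphisms; that last step is not guaranteed by the stated axioms on $\Phi$, though it is easily repaired by observing that your odd-stage $L_{\beta+1}$'s, taken as $P_\gamma$'s with $f_{\beta+1}=\pi_\gamma$, already form a cofinal subsequence of $\seq{P_\alpha}$ with the correct bonding maps. Finally, your invocation of (A) in the successor step is unnecessary: once $f_\beta$ factors through $\pi_\gamma$ with $\gamma \goe \alpha$, setting $L_{\beta+1}=P_\gamma$ already dominates both $L_\beta$ and $P_\alpha$, and the \parov\ property alone supplies $g_{\beta+1}$.
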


\begin{proof}
(1): First, suppose that $\w(K) < \kappa$.
By (T), there are $\Phi$-preserving surjections $\qmap f P E$ and $\qmap g K E$, where $E$ is a weakly terminal $\Phi$-object specified in condition (T).
By the definition of a $\kappa$-\parov\ space, we get a $\Phi$-preserving surjection $\map q P K$ additionally satisfying $g \cmp p = f$.

Assume now that $\w(K)=\kappa$.
By (D), $K = \liminv \seq {K_\al, q^\beta_\al, \kappa}$, where the sequence is continuous, all bonding maps are $\Phi$-preserving surjections and $\w(K_\al) < \kappa$ for each $\al < \kappa$.
We may assume that $K_0 = E$.
Suppose we have constructed $\Phi$-preserving surjections $\qmap {p_\al}P {K_\al}$ for $\al < \delta$, where $\delta < \kappa$ is a fixed ordinal, such that $q^\beta_\al \cmp p_\beta = p_\al$ for every $\al < \beta < \delta$.
If $\delta$ is a limit ordinal, we use the continuity of the sequence to construct $p_\delta$.
If $\delta = \beta + 1$ then we find $p_\delta$ just using the definition of a $\kappa$-\parov\ $\Phi$-structured compact space.
Finally, the surjective map $\qmap p P K$ is the limit of the sequence $\sett{p_\al}{\al<\kappa}$.
This completes the proof of (1).

(2) and (3):
We shall prove both statements simultaneously.
Namely, assume $P$ and $Q$ are two $\Phi$-structured $\kappa$-\parov\ compact spaces of weight $\kappa$.
Assume $P = \liminv \seq{X_\al, p_\al^\beta, \kappa}$ and $Q = \liminv \seq{Y_\al, q_\al^\beta, \kappa}$, where both sequences are continuous, all bonding maps are $\Phi$-preserving surjections and all spaces $X_\al$,  $Y_\al$ have weight $< \kappa$.
Now observe that the inverse sequence $\seq{X_\al, p_\al^\beta, \kappa}$ has the following property:
\begin{enumerate}
	\item[(P)] Given $\al < \kappa$, given a $\Phi$-preserving surjection $\qmap f K {X_\al}$ with $\w(K)<\kappa$, there exist $\beta > \al$ and a $\Phi$-preserving surjection $\qmap g {X_\beta} K$ such that $$f \cmp g = p^\beta_\al.$$
\end{enumerate}
This is easily proved by applying the $\kappa$-\parov\ property and the factorization property (F).
Obviously, the sequence $\seq{Y_\al, q_\al^\beta, \kappa}$ satisfies the same condition.

Now suppose for the moment that $X_0 = Y_0 = K$.
A standard back-and-forth argument gives the required $\Phi$-isomorphism from $P$ to $Q$.
This is visualized in the following commutative diagram:
$$\xymatrix{
X_0=K\ar@{>>}[d]_{\text{id}} & X_{\alpha_1}\ar@{>>}[l]_{p^{\alpha_1}_0} \ar@{>>}[ld]_{h_1}& &X_{\alpha_2}\ar@{>>}[ll]_{p_{\alpha_1}^{\alpha_2}} \ar@{>>}[ld]_{h_2} &\dots\ar@{>>}[l]& X_{\alpha_{\gamma+1}}\ar@{>>}[l] \ar@{>>}[ld]_{h_{\gamma+1}}&&X_{\alpha_{\gamma+2}}\ar@{>>}[ll]_{p_{\alpha_{\gamma+1}}^{\alpha_{\gamma+2}}} \ar@{>>}[ld]_{h_{\gamma+2}}& P
\ar@{--}[l] \\
Y_0=K &  &  Y_{\beta_1}\ar@{>>}[ll]^{q^{\beta_1}_{0}} \ar@{>>}[ul]^{j_1}&\dots\ar@{>>}[l]& Y_{\beta_{\gamma}}\ar@{>>}[l] &&Y_{\beta_{\gamma+1}}\ar@{>>}[ll]^{q^{\beta_{\gamma+1}}_{\beta_\gamma}} \ar@{>>}[ul]^{j_{\gamma+1}}&&Q\ar@{--}[ll]
}$$ 
The successor steps are made by applying condition (P) and the limit stages are taken care by condition (C). 

Now, assume that $K = E$ (a weakly terminal object from condition (T)).
Then the back-and-forth argument above shows that $P$ is isomorphic to $Q$, proving (3).

Finally, in order to show (2), let us assume that $P = Q$, $X_\al = Y_\al$ and $p_\al^\beta = q_\al^\beta$ in the diagram above.
Using the factorization property (F), we may assume that there are $\al_1,\beta_1<\kappa$ such that $f = p_{\al_1} \cmp p_0^{\al_1}$ and $g = q_{\beta_1} \cmp q^{\beta_1}_0$.
The same back-and-forth argument sketched in the diagram above gives the required isomorphism $h$ satisfying $f = g \cmp h$.
\end{proof}

\subsection{Topological algebras}\label{SubsERGhoea}

We now make a brief discussion of possible applications of the concepts of this section.
Namely, fix a countable first-order language $L$ consisting of function symbols (i.e. algebraic operations) and fix a class $\fK$ of compact $L$-algebras, that is, if $K \in \fK$ then $K$ is a compact Hausdorff space which is at the same time an $L$-model such that all $L$-operations in $K$ are continuous.
Let us assume that $\fK$ is closed under standard products, that is, a product of a family $\Ef \subs \fK$ is the usual product $\prod \Ef$ with the Tikhonov topology, and all $L$-operations are defined coordinate-wise.
Let us also assume that $\fK$ is stable under closed subalgebras.
In other words, if $K\in \fK$ and $K' \subs K$ is a closed subspace that is also closed under all $L$-operations, then $K' \in \fK$.

These two assumptions ensure that $\fK$ has inverse limits, that are actually inverse limits in the category of compact spaces.

In order to describe everything in the language of $\Phi$-structured compact spaces, we consider the natural forgetful functor $\Phi$ from $\fK$ into $\komp$, where $\fK$ is viewed as the category of all continuous surjections that preserve all $L$-operations.

It follows that $\fK$ (or formally the just-defined functor $\Phi$) satisfies conditions (A), (B) and (C).
In fact, (C) follows from the remarks above; (B) is a standard ``counting" argument (using the fact that the language is countable); (A) can be deduced from the fact that $\fK$ has products and is closed under substructures.
More precisely, given continuous epimorphisms of $L$-structures $\map f XZ$, $\map g YZ$, their pullback belongs to the category $\fK$.
Recall that the pullback of $\pair f g$ is a pair of maps $\pair {f'}{g'}$, where $\map {f'} W X$, $\map {g'} W Y$ are defined as follows:
$$W = \setof{\pair x y \in X \times Y}{f(x) = g(y)},$$
$$f'(x,y) = x \oraz g'(x,y) = y.$$
Condition (T) is satisfied as long as there exists an $L$-algebra $E$ in $\fK$ such that every other $K \in \fK$ has a continuous homomorphism onto $E$.
Finally, condition (D) requires an additional assumption.
Namely, we need to know that $\fK$ is closed under continuous homomorphic images.
More precisely, given $K \in \fK$, given a compact $L$-algebra $L$, if there exists a continuous epimorphism $\map f K L$, then $L \in \fK$.
Summarizing, we have:

\begin{tw}\label{Thmoergoierg}
Let $L$ be a countable first-order language consisting of function symbols only and let $\fK$ be a class of compact $L$-algebras with the following properties:
\begin{enumerate}
	\item[(1)] $\fK$ is stable under products and closed subalgebras,
	\item[(2)] $\fK$ is stable under continuous epimorphisms.
\end{enumerate}
Then $\fK$ satisfies conditions (A), (B), (C), (D) and (F).
\end{tw}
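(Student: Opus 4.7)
The plan is to verify conditions (A), (B), (C), (D), (F) separately, using closure of $\fK$ under products with closed subalgebras, closure under continuous epimorphisms, and the countability of $L$. Conditions (A), (B), (C) are essentially the arguments already sketched in the paragraph preceding the theorem. For (A) the pullback
\[
W=\{(x,y)\in X\times Y : f(x)=g(y)\}
\]
is a closed $L$-subalgebra of $X\times Y\in\fK$, so $W\in\fK$ by (1); its coordinate projections are $\Phi$-preserving surjections because $f$ and $g$ are, and $w(W)\loe w(X)\cdot w(Y)<\kappa$. For (B) one counts in two steps: compact Hausdorff spaces of weight $\loe\mu$ form at most $2^\mu$ homeomorphism classes (closed subsets of $[0,1]^\mu$), and on each such $K$ a continuous $n$-ary operation is determined by its closed graph in $K^{n+1}$, giving at most $2^\mu$ possibilities; since $|L|\loe\aleph_0$ this yields at most $(2^\mu)^{\aleph_0}=2^\mu$ $L$-structures per space, and summing over $\mu<\kappa$ produces $\loe\kappa^{<\kappa}$ isomorphism classes in total. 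For (C) the inverse limit formed in $\komp$ is a closed $L$-subalgebra of the product of the terms, hence in $\fK$ by (1); the weight estimate is classical for compact inverse systems with surjective bondings.

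The main obstacle is (D). Given $K\in\fK$ with $w(K)=\lam\goe\kappa$, the plan is to construct, by transfinite recursion on $\al\loe\lam$, a decreasing chain of closed congruences $\{\Theta_\al\}_{\al\loe\lam}$ on $K$ with $\Theta_\lam=\Delta_K$, with $\Theta_\al=\bigcap_{\beta<\al}\Theta_\beta$ at limit $\al$, and with $w(K/\Theta_\al)<\lam$ for $\al<\lam$; then $K=\liminv_{\al<\lam}K/\Theta_\al$ is the required continuous inverse sequence, and every $K/\Theta_\al$ lies in $\fK$ by hypothesis (2) applied to the quotient homomorphism $K\twoheadrightarrow K/\Theta_\al$. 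At the successor step I would fix in advance a closed base $\{B_\al\}_{\al<\lam}$ of $K$ and let $\Theta_{\al+1}$ be the smallest closed congruence refining $\Theta_\al$ for which $B_\al$ is saturated. The technical point to verify is that this closure operation keeps the quotient weight below $\lam$: the smallest closed congruence containing a relation of weight $<\lam$ can be produced by a countable iteration alternating topological closure with closure under the countably many $L$-operations, and since $\lam$ is uncountable the iteration stabilizes at weight $<\lam$. Finally $\bigcap_{\al<\lam}\Theta_\al=\Delta_K$ because $\{B_\al\}$ separates points of $K$.

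Condition (F) is a Shchepin-type factorization. The inverse sequences to which (F) is actually applied in this paper arise from the construction of (D) and are therefore continuous, and the factorization is invoked for targets $Y$ of weight $<\lam$ (otherwise no $f'\colon X_\al\twoheadrightarrow Y$ with $w(X_\al)<\lam$ could exist). Fixing a closed base of $Y$ of cardinality $<\lam$, each of its preimages under $f$ is a closed subset of $\lim\wek X$ of the form $p_{\al_B}^{-1}(F_B)$ for some $\al_B<\lam$, by continuity of the sequence together with regularity of $\lam$; taking $\al=\sup_B\al_B<\lam$ produces the desired factorization $f=f'\cmp p_\al$, with $f'$ automatically $\Phi$-preserving and surjective. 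The entire difficulty of the theorem lies in the bookkeeping of (D); once (D) is in place, (A), (B), (C), and (F) reduce to standard calculations.
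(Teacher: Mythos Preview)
Your treatment of (A), (B), (C) coincides with the paper's, which simply points back to the discussion preceding the theorem. The real divergence is in (D), and there is also an unnecessary restriction in your (F).

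For (D) the paper uses elementary submodels rather than a hand-built chain of congruences. Given $K$ with $w(K)=\lam$, one fixes a large enough $H(\theta)$ and a continuous $\in$-chain $\langle M_\al:\al<\lam\rangle$ of elementary submodels containing $K$ and its $L$-operations, with $|M_\al|<\lam$. Each $M_\al$ induces the equivalence $x\sim_{M_\al}y$ iff no continuous real-valued function in $M_\al$ separates $x$ from $y$; elementarity together with $L\in M_\al$ being countable makes $\sim_{M_\al}$ a closed congruence, so $K/M_\al\in\fK$ by hypothesis (2), and $w(K/M_\al)\loe|M_\al|<\lam$. The chain of quotients is the desired continuous inverse sequence. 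This packaging is what your closing-off argument is reaching for, but it handles the bookkeeping automatically.

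Your direct construction has a directionality slip that is more than cosmetic. You need $\Theta_{\al+1}\subs\Theta_\al$ (a \emph{finer} congruence) so that $B_\al$ becomes saturated, yet the operation ``smallest closed congruence \emph{containing} a relation, produced by alternating topological closure with closure under the $L$-operations'' \emph{coarsens} a relation; applying it can destroy both the inclusion in $\Theta_\al$ and the saturation of $B_\al$. The repair is to run the closing-off dually on subsets of $C(K)$: start with a separating family for $K/\Theta_\al$ together with a function cutting out $B_\al$, and iterate so that composites with each $L$-operation remain expressible through the family. Countability of $L$ keeps this at $\omega$ steps and cardinality $<\lam$; the induced equivalence is then the desired $\Theta_{\al+1}$. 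This is precisely what the elementary-submodel argument encodes.

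On (F), you do not need continuity of the sequence. Since $\lam$ is regular uncountable, the increasing union $\bigcup_{\al<\lam}p_\al^{\,*}C(X_\al)$ is already norm-closed in $C(\lim\wek X)$ (any Cauchy sequence sits in a single term), hence equals $C(\lim\wek X)$ by Stone--Weierstrass. Thus every continuous $g\colon\lim\wek X\to[0,1]$ factors through some $p_\al$; combining fewer than $\lam$ such factorizations via regularity handles any $Y$ with $w(Y)<\lam$, and the resulting $f'$ is an $L$-homomorphism because $p_\al$ is a surjective one. The paper does not spell this out either, but the statement (F) holds for arbitrary $\lam$-indexed sequences, not only those produced by (D).
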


\begin{pf}
In view of the remarks above, only condition (D) requires a proof.
This is a rather standard closing-off argument, although we were unable to find it explicitly written in the literature, therefore we sketch it below.
We use the method of elementary submodels and we refer to \cite[Chapter 17]{KKLP} for more details and explanations.

Namely, given a compact space $K$ and given an elementary submodel $M$ of a big enough $H(\theta)$ (the family of sets of hereditary cardinality $<\theta$), there is a natural equivalence relation $\sim_M$ on $K$, defined by $x \sim_M y$ iff $M$ contains no continuous function $\map f K\Err$ with $f(x) \ne f(y)$.
The quotient space with respect to this relation is denoted by $K \by M$ and the quotient map is denoted by $\map {q_M}{K}{K\by M}$.
The crucial fact is that $K\by M$ is an $L$-algebra and $q_M$ is a homomorphism, whenever $M$ ``knows" the $L$-operations of $M$.
This is satisfied whenever $L \in M$ is countable.

Note that the weight of $K\by M$ is $\loe |M|$ and the Skolem-L\"owenheim theorem says that there exists a countable elementary submodel $M$ of $H(\theta)$ such that $K$ and any fixed countable set is in $M$.

Finally, we need to know that if $\theta$ is big enough so that $K \in H(\theta)$, then the family of all elementary substructures of $H(\theta)$ that contain $K$ and all the $L$-operations on $K$ is directed and closed under unions of arbitrary chains.
A suitable chain of elementary submodels of $H(\theta)$, starting with a countable one, provides an inverse sequence witnessing (D).
\end{pf}

One should admit that condition (T) is usually trivial to check, therefore it has been ignored in the statement above.

\section{Compact median spaces}

A topological space is \emph{supercompact} if it has a subbase $\Bee$ for its closed sets (called a \emph{binary subbase}) such that every linked family $\Ef\subs \Bee$ has nonempty intersection.
A family $\Ef$ is \emph{linked} if $A\cap B\nnempty$ for every $A,B\in\Ef$.
By Alexander's subbase lemma, every supercompact space is compact. A nontrivial result of Strok \& Szyma\'nski \cite{StSz} says that every compact metric space is supercompact. By the result of van Douwen \& van Mill \cite{vDvM}, every infinite supercompact space contains non-trivial convergent sequences, therefore not all compact spaces are supercompact. We say that two sets $A, B\subs X$ are \textit{screened} with $C, D$ if $A\cap D=\emptyset =B\cap C$
and $C\cup D= X$.
A closed subbase $\Ef$ is a
\textit{normal subbase} if any $S,T\in\Ef$ with $S\cap T=\emptyset$ are screened with some $S', T'\in\Ef$. 
	 A topological space $X$ which possesses a normal binary subbase is called \textit{normally supercompact}.
	For more information about supercompactness we refer to van Mill's book \cite{vanMill}.
We now discuss briefly convexity structures in median spaces.
For more details we refer to van de Vel's monograph~\cite{vdV}.

%It is not hard to see that:

%\begin{lm}
%If $\Ef $ is a (normal) binary subbase of $X$, then so is
%$\Ef^\cap=\{\bigcap\Ef'\colon \Ef'\subs\Ef\}$.
%\end{lm}

%So, without loss of generality we can assume that every (normally) supercompact space has a (normal) binary subbase which is %closed with respect to arbitrary intersection and hence, by the Hausdorffness, every one-point set is in the subbase.  

Supercompact spaces, especially normally supercompact spaces, have some nice geometric properties, in the sense of general convexity structures. Recall that an {\em interval convexity} ({\em convexity} for short) on the set $X$ is the family $\mathcal{G}$ of subsets of $X$ which satisfies the following conditions:
\begin{enumerate}
\item[(1)] $\emptyset,X\in\Gee$;
\item[(2)] if $\Gee'\subs \Gee$ then $\bigcap\Gee'\in\Gee$;
\item[(3)] if $A\subs X$ and for each $a, b\in A$ there is $C\in\Gee$ with $a,b\in C \subs A$ then $A\in\Gee$.
\end{enumerate}

Elements of $\Gee$ are called {\em convex sets}. A convex set whose complement is convex is called a {\em halfspace}. By (2) we can define the {\em convex hull} $\conv A$
of any set $A \subs X$ as the set $\bigcap\{F\in\Gee\colon A\subs F\}$. We write $[a,b]$ instead of $\conv\{a,b\}$ and call
it the {\em segment} joining $a$ and $b$. 

%%%%%%%%%%%%%%%%%%%%%%
%Every (interval) convexity on a set $X$ can be generated
%by a an interval map $I\colon X \times X \to \Pee(X)$, by the formula
%$\Gee_I = \{A\subs X\colon  I(a, b) \subs A\text{ for each }a,b\in A\}$.
%%%%%%%%%%%%%%%%%%%%%
 
%It is easy to see that for each family $\Pee$ of subsets of a set $X$ there is the smallest convexity $\Gee$ on $X$ containing %$\Bee$. In this case we say that the convexity $\Gee$ is {\em generated} by $\Bee$ and that $\Bee$ is {\em subbase} for the %convexity $\Gee$.

Let $X$ be a normally supercompact space with a fixed normal binary subbase $\Bee$.
The interval map $I_\Bee\colon X\times X\to\Pee(X)$ is defined by the formula:
$$I_\Bee(a,b)=\bigcap\{B\in \Bee\colon a,b\in B\}.$$
It is not hard to see that the family 
$\Gee_{I_\Bee}=\{A\subs X\colon  I_\Bee(a, b) \subs A\text{ for each }a,b\in A\}$ is a convexity containing $\mathcal{B}$
%generated by $\Bee$
and $I_\Bee(a,b)=[a,b]$, where the segment $[a,b]$ is considered with respect to the convexity $\Gee_{I_\Bee}$. 
Since $\Bee$ is binary, $[x,y]\cap [x,z]\cap [y,z]\ne \emptyset$ for each $x,y,z\in X$.
Moreover, this intersection contains exactly one point $m(x,y,z)$, called the {\em median} of $x,y,z$. This follows from the fact that each two distinct points can be screened with two sets from $\mathcal{B}$. Since each element of $\mathcal{B}$ is convex, we see that every normally supercompact space satisfies the condition: 
\begin{center}
$CC_2$: each two distinct points can be screened with closed convex sets.
\end{center}
Let us note that the convexity $\Gee_{I_\Bee}$ is binary in the sense that each finite linked subfamily of $\Gee_{I_\Bee}$ has nonempty intersection (see \cite[Theorem 1.3.3]{vanMill}). 
So, each normally supercompact space is a compact space with a binary convexity satisfying  $CC_2$.

It turns out that the converse is true as well.
If $X$ is a compact space with a binary convexity $\Gee$ satisfying
$CC_2$ then, by compactness, the collection of all closed convex sets is a closed subbase for the topology of $X$.
Moreover, the collection of all closed convex sets is normal (see \cite[Proposition V.1.2]{WK}).
In conclusion, the following two classes topological spaces are equal: 
\begin{itemize}
	\item compact spaces with a binary convexity satisfying $CC_2$
	\item normally supercompact spaces.
	\end{itemize}
Such spaces will be called {\em compact median spaces}.

\subsection{Basic properties of compact median spaces}

In the class of compact median spaces the following counterpart of Urysohn lemma is  true (see \cite[Theorem 3.3]{vMW}):

\begin{prop}\label{vMW}
If $X$ is a compact median space and $x,y$ are distinct points of $X$ then there is a continuous map $f\colon X\to [0,1]$ such that $f(x)=0$, $f(y)=1$ and $f^{-1}(A)$ is convex for each interval $A\subs [0,1]$. 
\end{prop}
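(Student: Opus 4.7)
The plan is to adapt the classical Urysohn lemma, using normality of the family of closed convex sets (noted just before the proposition) in place of topological normality, and arranging the dyadic construction so that convexity is preserved at every step.

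First, apply $CC_2$ to the disjoint closed convex pair $\{x\},\{y\}$ to obtain closed convex sets $S_{1/2}, T_{1/2}$ screening them, so that $x \in S_{1/2}$, $y \in T_{1/2}$, and $S_{1/2} \cup T_{1/2} = X$. Proceed inductively, producing for each dyadic rational $r \in (0,1)$ a pair of closed convex sets $S_r, T_r$ satisfying
\begin{enumerate}
\item[(i)] $S_r \cup T_r = X$,
\item[(ii)] $x \in S_r$ and $y \in T_r$,
\item[(iii)] $S_r \cap T_{r'} = \emptyset$ whenever $r < r'$.
\end{enumerate}
To introduce a new index $t$ halfway between consecutive indices $r < r'$, apply normality of the family of closed convex sets to the disjoint pair $S_r, T_{r'}$ and declare the resulting screen to be $(S_t, T_t)$. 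A short check using $S_t \cup T_t = X$ shows the invariants (i)--(iii) are preserved; in particular (ii) holds because $x \in S_r \subs X \setminus T_t \subs S_t$ and symmetrically for $y$.

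Define $f\colon X \to [0,1]$ by $f(p) = \inf\{r : p \in S_r\}$, with $\inf\emptyset = 1$. Invariant (ii) yields $f(x) = 0$ and $f(y) = 1$. From (i) and (iii) one sees that for dyadic $r < r'$ some intermediate $t$ satisfies $S_r \subs X \setminus T_t \subs S_{r'}$, so $S_r$ lies in the interior of $S_{r'}$ and $f$ is continuous by the standard Urysohn argument. The same invariants identify $f^{-1}([a,1]) = \bigcap_{r<a} T_r$ and $f^{-1}([0,b]) = \bigcap_{r>b} S_r$ as intersections of closed convex sets.

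The only step that goes beyond the pure Urysohn argument is convexity of preimages of open and half-open intervals. Here $f^{-1}([0,b)) = \bigcup_{r<b} S_r$ is an increasing union of convex sets, so any two of its points lie in a common member $S_r$; axiom (3) of an interval convexity then yields convexity of the union, and the symmetric argument handles $f^{-1}((a,1])$. Since every interval preimage is an intersection of sets of these four types, convexity of $f^{-1}(A)$ follows for every interval $A \subs [0,1]$. I expect the main point requiring care to be the bookkeeping in the inductive step, namely verifying that invariants (i)--(iii) survive each application of normality; once the definition of a screen is unfolded this is routine.
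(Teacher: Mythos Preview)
Your argument is correct and is the standard Urysohn-type construction for normally supercompact spaces; the only detail glossed over is the initialization of the induction (one sets boundary data such as $S_0=\{x\}$, $T_1=\{y\}$, $T_0=S_1=X$ so that the first bisection step has a pair of consecutive indices $r<r'$ to work between), and this is routine. The paper does not supply its own proof of this proposition---it simply quotes it from van Mill and Wattel \cite[Theorem~3.3]{vMW}, where the argument is essentially the one you outline.
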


The map $f$ from the last theorem is a special case of a more general concept of  convexity preserving map.
Namely $f\colon X\to Y$, is called a {\em convexity preserving map}, if $f^{-1}(G)\in\Ef$ for each $G\in\Gee$, where $X$ and $Y$ are spaces with convexities $\Ef$ and $\Gee$ respectively.
In case of binary convexities, this is equivalent to
\begin{equation}
f([a,b]) \subs [f(a),f(b)] \text{ for every }a,b\in X.
\tag{cp}\label{Equcp}
\end{equation}
For median spaces, convexity preserving maps are precisely the median preserving maps (with the obvious meaning).
Indeed, a convexity preserving map $f$ necessarily preserves the median, because of (\ref{Equcp}).
On the other hand, if $f$ is median preserving and $x \in [a,b]$ then $m(x,a,b)=x$ and hence $f(x)=m(f(x),f(a),f(b))\in [f(a),f(b)]$, showing that $f$ satisfies (\ref{Equcp}).

{From} now on, we will consider the category $\mathfrak{CM}$ consisting of all compact median spaces and surjective continuous median preserving maps .
Such maps will be called {\em epimorphisms} and denoted $f\colon X\twoheadrightarrow Y$ for $X,Y\in\mathfrak{CM}$.
Recall that a subset $X$ of a median space $M$ is \emph{median-closed} (or \emph{median-stable}) if $m(x,y,z) \in X$ whenever $x,y,z \in X$.
It follows from Proposition~\ref{vMW} that:

\begin{tw}[cf. {\cite[Thm. 3.4]{vMW}}]\label{ThmCntrmse}
Every compact median space $X$ is isomorphic to a median-closed subset of the Tikhonov cube $[0,1]^\kappa$, where $\kappa=w(X)$. In the case of zero-dimensional spaces, Tikhonov cube can be replaced by the Cantor cube $\{0,1\}^\kappa$. 
\end{tw}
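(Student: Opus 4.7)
The plan is to build an embedding into $[0,1]^\kappa$ coordinate by coordinate, using Proposition~\ref{vMW} as the Urysohn-type tool that supplies median-preserving continuous maps into $[0,1]$ separating any two prescribed points.

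First I would fix a family $\Ef = \setof{f_i}{i\in\kappa}$ of continuous median-preserving maps $f_i\colon X\to[0,1]$ whose weak topology agrees with the original topology of $X$. To keep $|\Ef|\loe\kappa=w(X)$, I would fix a base $\Bee$ of $X$ of size $\kappa$, and for each pair $(U,V)\in\Bee\times\Bee$ with $\Cl U\cap(X\setminus V)=\emptyset$ produced by normality of $X$, apply Proposition~\ref{vMW} to the pair of points being separated (or, more cleanly, to pairs from a dense set of size $\kappa$) to obtain a single map of that kind, and then collect these together. The resulting family separates points of $X$: for distinct $x,y\in X$, Proposition~\ref{vMW} directly gives some $f_i$ with $f_i(x)\ne f_i(y)$; a routine refinement shows it is enough to take $\kappa$ of them.

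Next, define $\map e X {[0,1]^\kappa}$ by $e(x)=\seqof{f_i(x)}{i\in\kappa}$. Continuity and separation of points, combined with compactness of $X$, make $e$ a topological embedding. Since each coordinate projection $\pi_i$ of $[0,1]^\kappa$ is a median homomorphism onto $[0,1]$ and the median on $[0,1]^\kappa$ is computed coordinatewise, the equalities $\pi_i\cmp e=f_i$ and the median-preservation of each $f_i$ yield that $e$ itself preserves medians. Consequently, for any $a,b,c\in X$,
$$m_{[0,1]^\kappa}(e(a),e(b),e(c))=e(m_X(a,b,c))\in e(X),$$
so the compact (hence closed) set $e(X)$ is median-closed in $[0,1]^\kappa$. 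Thus $e$ is the required isomorphism onto a median-closed subspace.

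For the zero-dimensional case, the same argument works provided one can choose each $f_i$ to take values in $\{0,1\}$. This is where the compact median structure helps: when $X$ is zero-dimensional the halfspaces that are closed (equivalently clopen, since the complement of a closed halfspace is an open convex set, and by compactness plus zero-dimensionality one gets clopenness) form a normal binary subbase. For each such clopen halfspace $H$, the characteristic function $\chi_H\colon X\to\{0,1\}$ is continuous (as $H$ is clopen) and median-preserving (since both $H$ and $X\setminus H$ are convex, so if two of $a,b,c$ lie in $H$ then so does $m(a,b,c)$, matching the median in $\{0,1\}$). Selecting $\kappa$ many such halfspaces to separate points and form a subbase — possible because the clopen halfspaces already form a subbase and $X$ has weight $\kappa$ — and assembling the corresponding characteristic functions into a map $X\to\{0,1\}^\kappa$ gives the analogous embedding.

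The main obstacle I expect is the bookkeeping to ensure $|\Ef|\loe\kappa$ while keeping separation, i.e.\ manufacturing a point-separating family of continuous median-preserving maps of the correct cardinality; the median-preservation of the resulting map into the cube and the median-closedness of its image are then immediate from the coordinatewise definition of the median on $[0,1]^\kappa$ (and $\{0,1\}^\kappa$) together with compactness.
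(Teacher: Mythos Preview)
Your approach matches what the paper intends: it states the theorem as a consequence of Proposition~\ref{vMW} and gives no further proof, so supplying the diagonal-embedding argument from a point-separating family of convexity (hence median) preserving maps into $[0,1]$ is exactly the intended route. The Tikhonov-cube part is fine; the cardinality bookkeeping you flag as the ``main obstacle'' is handled by the standard trick you gesture at (embed via \emph{all} such maps, then note that the image has weight $\kappa$ and therefore depends on only $\kappa$ coordinates).

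One genuine slip in the zero-dimensional case: your parenthetical claim that closed halfspaces are automatically clopen is wrong. The complement of a closed halfspace is open \emph{by definition of closed}, and its convexity adds nothing toward clopenness; zero-dimensionality does not turn arbitrary closed sets into clopen ones. What you actually need --- and what you correctly use two lines later --- is that the \emph{clopen} halfspaces form a subbase separating points. In the paper this is Lemma~\ref{baza}: any two disjoint closed convex sets in a zero-dimensional compact median space are separated by a clopen halfspace, so in particular any two distinct points are. With that lemma in place (or the equivalent statement that the clopen halfspaces are a normal binary subbase), your characteristic-function argument goes through unchanged. Just delete the faulty parenthetical and invoke Lemma~\ref{baza} instead.
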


Let us mention the following consequence (cf.~\cite[Theorem 2.2]{KK}): 
 
\begin{tw}\label{ppp}
Each compact median space $X$ of weight $\kappa\goe\aleph_0$ can be represented as the limit of an inverse sequence of compact median spaces
$\seq {X_\alpha;p_\alpha^\beta; \alpha\le \beta<\kappa}$, where: 
\begin{enumerate}
\item $|X_0|=1$.
\item for a limit ordinal $\lam<\kappa$;
$X_\lam=\liminv \seq{X_\alpha:\;\alpha<\lam}$.
\item $w(X_\alpha)<w(X)$ for each $\alpha<\kappa$.
\item Each bonding map $p_\al^\beta$ is convexity preserving.
\item If $X$ is zero-dimensional then so is each $X_\alpha$ and moreover for each $\alpha<\kappa$ there exist closed convex sets
$A_\alpha,B_\alpha\subs X_\alpha$ such that $A_\alpha\cup
B_\alpha=X_\alpha$,
$X_{\alpha+1}=(A_\alpha\times\{0\})\cup(B_\alpha\times\{1\})$ and 
$p^{\alpha+1}_\alpha$ is the projection.
\end{enumerate}
\end{tw}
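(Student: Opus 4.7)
The plan is to construct the inverse sequence directly from an embedding of $X$ into a cube. By Theorem~\ref{ThmCntrmse}, there is a median embedding of $X$ as a median-closed subspace of $C^\kappa$, where $C = [0,1]$ in general and $C = \{0,1\}$ when $X$ is zero-dimensional. For each $\alpha \leq \kappa$ I set $X_\alpha = \pi_\alpha(X)$, where $\pi_\alpha \colon C^\kappa \to C^\alpha$ is the projection onto the first $\alpha$ coordinates, and take $p_\alpha^\beta \colon X_\beta \twoheadrightarrow X_\alpha$ to be the induced projection.

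The first task is to verify the structural properties. Since the median on $C^\kappa$ is computed coordinatewise, each $\pi_\alpha$ is median preserving; a one-line calculation using $f(m(x,y,z)) = m(f(x),f(y),f(z))$ shows that a median-preserving image of a median-closed set is median-closed. Hence each $X_\alpha$ is a compact median space and each bonding map is an epimorphism in $\mathfrak{CM}$, which is exactly convexity preserving by the discussion preceding the theorem; this handles conditions (1) and (4). Condition (3) follows from $w(X_\alpha) \leq |\alpha| + \aleph_0 < \kappa$ whenever $\kappa$ is uncountable and $\alpha < \kappa$.

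The main technical point is the continuity condition (2) at limit ordinals $\lambda < \kappa$. One inclusion, $X_\lambda \subseteq \liminv \seq{X_\alpha : \alpha < \lambda}$, is immediate from the compatibility of projections. For the reverse, given a thread $(y_\alpha)_{\alpha<\lambda}$ in the inverse limit, the sets $F_\alpha = X \cap \pi_\alpha^{-1}(y_\alpha)$ form a decreasing family of nonempty closed subsets of the compact space $X$, so by the finite intersection property they have a common point $x$, and $\pi_\lambda(x)$ recovers the thread.

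For the zero-dimensional refinement~(5), the successor step is the adjunction of a single coordinate from $\{0,1\}$. Setting $A_\alpha = p_\alpha^{\alpha+1}\bigl(\{x \in X_{\alpha+1} : x(\alpha)=0\}\bigr)$ and $B_\alpha$ analogously, the slabs $\{x(\alpha)=i\}$ are closed halfspaces in $\{0,1\}^{\alpha+1}$ and hence convex; their intersections with $X_{\alpha+1}$ are closed convex in $X_{\alpha+1}$, and I would then invoke that median-preserving surjections send closed convex sets to closed convex sets (same calculation as above) to conclude that $A_\alpha, B_\alpha$ are closed convex in $X_\alpha$. Their union exhausts $X_\alpha$ by surjectivity of $p_\alpha^{\alpha+1}$, and the product decomposition of $X_{\alpha+1}$ is just reconstruction from the last coordinate. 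The subtlest point throughout is ensuring that convexity passes through projections, but this rests entirely on the binary median calculus already available.
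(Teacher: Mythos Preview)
The paper does not actually prove this theorem: it is stated as a consequence of Theorem~\ref{ThmCntrmse} with a reference to \cite[Theorem~2.2]{KK}, and no argument is given. Your direct construction---embedding $X$ into $C^\kappa$ and taking $X_\alpha$ to be the projection onto the first $\alpha$ coordinates---is precisely the natural way to extract such an inverse sequence from the cube embedding, and is almost certainly what the cited source does. The verification of (1), (2), (4) and the identification $X=\varprojlim X_\alpha$ are correct as you wrote them.

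Two small points are worth tightening. First, in (5) your claim that ``median-preserving surjections send closed convex sets to closed convex sets'' is true, but it is \emph{not} the same calculation as for median-closed sets: convexity of $f(C)$ requires, given $w\in[f(a),f(b)]$, pulling $w$ back to some $c\in X$ via surjectivity and then observing $m(a,c,b)\in[a,b]\subseteq C$ with $f(m(a,c,b))=w$. The surjectivity is essential here in a way it was not for median-closedness, so the parenthetical ``same calculation as above'' undersells what is needed. Second, you correctly flag that (3) as stated only follows from your construction when $\kappa>\aleph_0$; for $\kappa=\aleph_0$ your argument still gives finite $X_\alpha$ in the zero-dimensional case (since $X_\alpha\subseteq\{0,1\}^n$), but for a non-zero-dimensional metrizable $X$ condition (3) would force the $X_\alpha$ to be finite, hence the limit zero-dimensional---so that case is in tension with the statement itself rather than with your proof.
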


\begin{lm}\label{lim}
Let $K$, $L$ be compact median spaces, where $K$ is zero-dimensional of weight $\kappa \goe \aleph_0$, and let $f\colon K\to L$ be an epimorphism. 
There exists an inverse sequence of compact median spaces
${\cal S}= \seq {Y_\alpha,f^\beta_\alpha,\alpha<\beta<\kappa}$ 
such that $K=\liminv\cal S$ and 
\begin{enumerate}
	\item $Y_0=L$, $f_0=f$;
	\item for every limit ordinal $\lam<\kappa$,
$Y_\lam=\liminv \seq{Y_\alpha:\;\alpha<\lam}$;
\item for each $\alpha<\kappa$ there exist closed convex sets
$E_\alpha,F_\alpha\subs X_\alpha$ such that $E_\alpha\cup
F_\alpha=Y_\alpha$,
$Y_{\alpha+1}=(E_\alpha\times\{0\})\cup(F_\alpha\times\{1\})$, and
$f^{\alpha+1}_\alpha$ is the canonical projection.
\end{enumerate}
\end{lm}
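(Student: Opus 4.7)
The plan is to build the required inverse sequence by transfinite recursion, mimicking the argument of Theorem~\ref{ppp} but with $L$ (together with $f$) as the base instead of a singleton, and cutting off one clopen halfspace of $K$ at each successor step.

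First I would fix the cutting data. Using Theorem~\ref{ThmCntrmse}, I embed $K$ as a median-stable subset of a Cantor cube $\{0,1\}^\kappa$; restrictions of the coordinate projections give a family $\{H_\al\colon\al<\kappa\}$ of clopen halfspaces of $K$ (both $H_\al$ and $K\setminus H_\al$ are clopen and convex) that separates points of $K$. Equivalently, each characteristic function $\chi_{H_\al}\colon K\to\{0,1\}$ is a continuous median-preserving surjection onto the two-point median algebra.

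Then I would perform the recursion, producing compatible epimorphisms $f_\al\colon K\twoheadrightarrow Y_\al$. Set $Y_0=L$ and $f_0=f$. At a limit ordinal $\lam<\kappa$, let $Y_\lam=\liminv\seq{Y_\al:\al<\lam}$ and let $f_\lam$ be the canonical map induced by $\{f_\al\}_{\al<\lam}$. At a successor $\al+1$, define
$$Y_{\al+1}=\bigl\{(f_\al(x),\chi_{H_\al}(x))\colon x\in K\bigr\}\subs Y_\al\times\{0,1\},$$
take $f_\al^{\al+1}$ to be the first-coordinate projection, and let $f_{\al+1}(x)=(f_\al(x),\chi_{H_\al}(x))$. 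As the image of $K$ under a continuous median-preserving map into the product median algebra $Y_\al\times\{0,1\}$, the set $Y_{\al+1}$ is a closed median-stable subspace, i.e.\ a compact median space, and all maps in sight are $\mathfrak{CM}$-epimorphisms.

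To verify clause~(3), set $F_\al=f_\al(H_\al)$ and $E_\al=f_\al(K\setminus H_\al)$; by construction $Y_{\al+1}=(E_\al\times\{0\})\cup(F_\al\times\{1\})$ and $E_\al\cup F_\al=f_\al(K)=Y_\al$. Closedness follows from compactness of $K$ and continuity of $f_\al$. For convexity of $F_\al$ (the argument for $E_\al$ is identical), given $y_1,y_2\in F_\al$ with lifts $x_1,x_2\in H_\al$ and any $y\in[y_1,y_2]$, pick $x\in K$ with $f_\al(x)=y$; then $m(x,x_1,x_2)\in[x_1,x_2]\subs H_\al$ by convexity of $H_\al$, and $f_\al(m(x,x_1,x_2))=m(y,y_1,y_2)=y$, so $y\in F_\al$. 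Finally, the induced map $\Phi\colon K\to\liminv\cal S$ is a continuous median-preserving surjection (its image is dense and compact, hence all of $\liminv\cal S$), and if $x\ne y$ in $K$ then some $H_\al$ separates them, so $\chi_{H_\al}(x)\ne\chi_{H_\al}(y)$, whence $f_{\al+1}(x)\ne f_{\al+1}(y)$ and $\Phi(x)\ne\Phi(y)$. Thus $\Phi$ is an isomorphism of compact median spaces, i.e.\ $K=\liminv\cal S$.

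The only nontrivial point is the convexity of the ``halves'' $E_\al,F_\al\subs Y_\al$, which crucially uses that $f_\al$ is a median-preserving surjection so that intervals downstairs are images of intervals upstairs; everything else is routine transfinite bookkeeping of the same flavor as the proof of Theorem~\ref{ppp}.
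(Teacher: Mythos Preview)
Your proof is correct and follows essentially the same approach as the paper: both take the clopen halfspaces of $K$ coming from its Cantor-cube embedding (the paper routes this through Theorem~\ref{ppp}, you invoke Theorem~\ref{ThmCntrmse} directly) and push them forward along the successive quotients $f_\al$, building $Y_{\al+1}$ as a ``split'' of $Y_\al$. Your explicit verification that $E_\al,F_\al$ are convex via $m(x,x_1,x_2)\in H_\al$ and $m(y,y_1,y_2)=y$ is a welcome addition that the paper's proof leaves implicit.
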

\begin{pf} 
Let $K=\liminv \Es$ where $\Es = \seq{X_\alpha,p^\beta_\alpha,\alpha\le\beta<\kappa}$ is an inverse sequence of compact median zero-dimensional spaces
such as in the Theorem~\ref{ppp}.  
%\begin{enumerate}
%\item[(1)] $|X_0|=1$.
%\item[(2)] For a limit ordinal $\lam<\kappa$,
%$X_\lam=\liminv(X_\alpha:\;\alpha<\lam)$.
%\item[(3)] For each $\alpha<\kappa$ there exist closed convex sets
%$A_\alpha,B_\alpha\subs X_\alpha$ such that $A_\alpha\cup
%B_\alpha=X_\alpha$,
%$X_{\alpha+1}=(A_\alpha\times\{0\})\cup(B_\alpha\times\{1\})$,
%$p^{\alpha+1}_\alpha$ is the projection. 
%\end{enumerate}  
Inductively we define a map $h_{\alpha+1}:K\to Y_{\alpha+1}$ by $$
h_{\alpha+1}(x)= 
\begin{cases}(h_\alpha(x),0) & x\in p^{-1}_{\alpha+1}(A_\alpha\times\{0\})\\
                 (h_\alpha(x),1) & \mbox{otherwise.}

\end{cases}$$ and  $Y_{\alpha+1}=(E_\alpha\times\{0\})\cup(F_\alpha\times\{1\})$ and $q^{\alpha+1}_\alpha:
Y_{\alpha+1}\to Y_\alpha$ is the projection, where $E_\alpha=h_\alpha[p^{-1}_{\alpha+1}(A_\alpha\times\{0\})]$ and
$F_\alpha=h_\alpha[p^{-1}_{\alpha+1}(B_\alpha\times\{1\})].$ Let $Y_0=L$ and $h_0=f$. 
If $\alpha<\kappa$ is a limit ordinal then let $h_\alpha$ be the map induced by $\{h_\beta:\beta<\alpha\}$ and let
$Y_\beta=\liminv \seq{Y_\alpha:\;\alpha<\beta}$. Let $Y=\liminv \seq{Y_\alpha, q^{\beta}_\alpha,\kappa}$.
Since all $h_\alpha$ are epimorphisms, the induced map $h:K\to Y$ is an epimorphism. We claim that $h$ is one-to-one, i.e. it is an isomorphism.
Indeed, if $x,y\in K$ are different points, then there is $\alpha<\kappa$ and clopen sets $p^{-1}_{\alpha+1}(A_\alpha\times\{0\}),p^{-1}_{\alpha+1}(B_\alpha\times\{1\})$ such that $x\in p^{-1}_{\alpha+1}(A_\alpha\times\{0\})$ and $y\in p^{-1}_{\alpha+1}(B_\alpha\times\{1\}).$ Therefore $h(x)\in
q^{-1}_{\alpha+1}(E_\alpha\times\{0\})$ and $h(y)\in q^{-1}_{\alpha+1}(F_\alpha\times\{1\}).$
\end{pf}

%%%%%%%%%%%%%%%%%%%%%%%%%%%%%%%%%%%%%%%%%%%%%%%%%%%
%Let $(X,m)$ be an interval space and $b\in X$ We consider the following binary relation $\leq_{b}$ on $X$

%\[u\leq_b v \mbox{ if and only if } I(b,u)\subs I(b,v).\]

%It is obvious that $u\leq_b v$ if and only if $m(u,v,b)=u$. Since median interval operator
%is geometric by \cite[Proposition I.5.2, p.92]{vdV} the binary relation $\leq_{b}$ on $X$ is partial
%order and if $u\leq_b v$ then $I(u,v)=\{x| u\leq_b  x\leq_b v\}$. Therefore any such set
%$\{x| u\leq_b  x\leq_b v\}$ is closed.

%\begin{fact}\label{chain}Let $(X,m)$ be compact median space and let $p\in X$. If $A\subset X$ is a chain with respect to partial order $\leq_{p}$, then $\cl A$ is the chain.
%\end{fact}
%\begin{pf}
%Let $x\in \cl A$. Suppose that $\cl A$ is not the chain. There exists $c\in \cl A$ such that $\neg(x \leq_{p} c)$ and $\neg(c \leq_{p} x).$ Since $m(c,p,x)\not\in\{c,x\}$ and the function $m$ is continuous there exist open sets $U, V_x, V_c$ such that $m(c,p,x)\in U$ and $x\in V_x, c\in V_c$ and $m(V_c\times\{p\}\times V_x)\subs U$. Since $x,c\in\cl A$ there exist $a_c\in V_c\cap A$ and $a_x\in V_x\cap A$. We get $m(a_c,p,a_x)\in m(V_c\times\{p\}\times V_x)\subs U$, hence  $m(a_c,p,a_x)\not\in\{a_c,a_x\}$, but $A$ is the chain, a contradiction.
%\end{pf}

\begin{lm}[{\cite[Lemma 1.1, p. 45]{WK}}]\label{baza}
Let $A, B$ be two disjoint closed convex subset of a zero-dimensional compact median space. Then there exists a clopen halfspace $H\subs X$ such that $A\cap H=\emptyset$ and $B\subs H$.
\end{lm}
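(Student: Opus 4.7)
The plan is to prove this classical median-convexity separation theorem in two stages: first establish separation by a (not necessarily clopen) halfspace in any compact median space via a Zorn-type argument, and then use zero-dimensionality together with Theorem~\ref{ppp}(5) to upgrade the halfspace to a clopen one.

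For the first stage, I would let $\Ef$ denote the family of closed convex subsets of $X$ that contain $B$ and are disjoint from $A$. This family is nonempty because $B\in \Ef$, and it is closed under closures of directed unions: the median operation is continuous on the compact space $X$, so the closure of a directed union of convex sets is convex, and disjointness from the closed set $A$ is preserved under closure of a union disjoint from $A$. Zorn's lemma therefore yields a maximal element $H\in\Ef$. To see that $H$ is a halfspace, I would argue by contradiction: if $X\setminus H$ failed to be convex, there would exist $x,y\notin H$ with some $z\in [x,y]\cap H$; maximality of $H$ forces $\conv(H\cup\sn x)\cap A$ and $\conv(H\cup\sn y)\cap A$ both to be nonempty, say contributing points $a_x,a_y\in A$ of the form $a_x=m(x,h_1,h_2)$ and $a_y=m(y,h_3,h_4)$ with $h_i\in H$. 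Exploiting the standard median identities together with $z\in[x,y]\cap H$ then produces a point of $A\cap H$, contradicting $A\cap H=\emptyset$.

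For the second stage, I would exploit zero-dimensionality to replace $H$ by a clopen halfspace. By Theorem~\ref{ppp}(5), represent $X$ as a continuous inverse limit $\seq{X_\alpha,p_\alpha^\beta,\alpha<\kappa}$ of zero-dimensional compact median spaces in which every successor stage $X_{\alpha+1}=(A_\alpha\times\sn 0)\cup(B_\alpha\times\sn 1)$ is obtained from $X_\alpha$ by a clopen splitting, and the bonding maps are the canonical projections. Each such split produces a canonical clopen halfspace of $X_{\alpha+1}$ whose pullback to $X$ is itself a clopen halfspace. A compactness argument based on $A=\bigcap_\alpha p_\alpha^{-1}(p_\alpha(A))$ and $B=\bigcap_\alpha p_\alpha^{-1}(p_\alpha(B))$ yields an index $\alpha$ with $p_\alpha(A)\cap p_\alpha(B)=\emptyset$ in $X_\alpha$. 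I would then apply Lemma~\ref{lim} to an auxiliary epimorphism $X\twoheadrightarrow\sn{0,1}$ that separates $A$ from $B$, to refine the inverse sequence so that the very next splitting after stage $\alpha$ has $A_\alpha\sups p_\alpha(A)$ and $B_\alpha\sups p_\alpha(B)$; the pullback to $X$ of the corresponding clopen halfspace of $X_{\alpha+1}$ is then the required $H$.

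The main obstacle lies in the second stage: the halfspace produced abstractly in stage one is only closed, not clopen, and to upgrade it one must carefully align the built-in splitting halfspaces of the zero-dimensional inverse-limit representation with the particular sets $A$ and $B$. An arguably cleaner alternative, avoiding stage one entirely, would be to show directly (again by induction along the sequence from Theorem~\ref{ppp}(5)) that the clopen halfspaces of $X$ themselves form a normal binary subbase for the topology of $X$; the desired separation would then follow at once from the classical screening property of normal subbases applied to the two disjoint closed convex sets $A$ and $B$.
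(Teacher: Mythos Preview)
First, note that the paper itself does not prove this lemma; it is quoted from the thesis~\cite{WK}, so there is no in-paper argument to compare against.

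Your two-stage plan has a genuine gap in each stage. In stage one, the Zorn argument over \emph{closed} convex sets fails: the closure of an increasing union of closed convex sets disjoint from $A$ need not remain disjoint from the closed set $A$ (in the median space $X=[0,1]$ take $A=\{1\}$, $B=\{0\}$, $C_n=[0,1-\tfrac1n]$). The standard remedy is to maximize over \emph{all} convex sets containing $B$ and disjoint from $A$; unions of chains then cause no trouble and the maximal element is a halfspace, but only an abstract one, with no topological control. In stage two you propose to apply Lemma~\ref{lim} to ``an auxiliary epimorphism $X\twoheadrightarrow\{0,1\}$ that separates $A$ from $B$''; however, a continuous median-preserving surjection onto $\{0,1\}$ is \emph{exactly} a clopen halfspace separating $A$ from $B$, which is what you are trying to construct. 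The abstract halfspace from stage one yields only a discontinuous median homomorphism $X\to\{0,1\}$, and Lemma~\ref{lim} requires its input map to be an epimorphism in $\mathfrak{CM}$, hence continuous.

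A clean route is the one you gesture at in your final sentence, but executed via Theorem~\ref{ThmCntrmse} rather than the heavier Theorem~\ref{ppp} and Lemma~\ref{lim}. Embed $X$ as a median-closed subset of $\{0,1\}^\kappa$; by compactness there is a finite $F\subs\kappa$ with $\pi_F(A)\cap\pi_F(B)=\emptyset$, where $\pi_F$ is the projection to the finite median algebra $\pi_F[X]\subs\{0,1\}^F$. In a \emph{finite} median algebra the purely algebraic Zorn/maximality argument separates the disjoint convex sets $\pi_F(A)$ and $\pi_F(B)$ by a halfspace $H_0$ with no topological obstruction, and $\pi_F^{-1}(H_0)$ is then the required clopen halfspace of $X$.
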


\begin{lm}\label{half-base}
If $X$ is a compact median space and $\Raa$ is a normal family which is a subbase both for the closed sets and for the convexity, then any normal subbase $\Bee\subs \Raa$ for the closed sets is a subbase for the convexity. Moreover any $x\in X$ and 
closed convex $C\subs X\setminus \{x\}$ can be screened with some $A,B\in\Bee$.
\end{lm}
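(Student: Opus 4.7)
The plan is to prove the \emph{moreover} statement first; the main assertion that $\Bee$ is a subbase for the convexity then follows by intersection. Indeed, for any closed convex $C$ and any $x\notin C$, the screening produces some $B\in\Bee$ with $C\subs B$ and $x\notin B$, whence $C=\bigcap\{B\in\Bee: C\subs B\}$, exhibiting $C$ as an intersection of $\Bee$-members. Note at the outset that $\Bee\subs\Raa\subs\Gee$, so every member of $\Bee$ is closed and $\Gee$-convex; consequently both $\Bee$ and $\Raa$ are \emph{binary} families, every linked subfamily having non-empty intersection by virtue of $\Gee$ being binary.

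First I would screen $\{x\}$ and $C$ within $\Raa$. Both $\{x\}$ and $C$ are disjoint closed $\Gee$-convex sets, hence each is an intersection of $\Raa$-members. The union family $\{R\in\Raa: x\in R\}\cup\{R\in\Raa: C\subs R\}$ has empty intersection, so by the binary property of $\Raa$ it cannot be linked; there exist $R_x,R_C\in\Raa$ with $x\in R_x$, $C\subs R_C$ and $R_x\cap R_C=\emptyset$. Normality of $\Raa$ promotes this disjoint pair to an $\Raa$-screening $R_x',R_C'\in\Raa$, with $R_x'\cup R_C'=X$, $x\notin R_C'$ and $C\cap R_x'=\emptyset$.

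The main step, and the principal obstacle, is refining this $\Raa$-screening into a $\Bee$-screening. The intermediate goal is to produce a disjoint pair $B_1,B_2\in\Bee$ with $C\subs B_1$ and $x\in B_2$; once obtained, normality of $\Bee$ applied to this pair yields $B_1',B_2'\in\Bee$ with $B_1'\cup B_2'=X$, $B_1\cap B_2'=\emptyset$ and $B_2\cap B_1'=\emptyset$, after which $(A,B)=(B_2',B_1')$ furnishes an honest $\Bee$-screening of $\{x\}$ and $C$. To find such $B_1$, I would use that $\Bee$ is a subbase for the closed sets to cover $R_C'$ by finitely many $\Bee$-members missing $x$, obtaining $B_1,\dots,B_n\in\Bee$ with $C\subs R_C'\subs\bigcup_i B_i$ and $x\notin B_i$; then, in a subcover of $C$ of minimum cardinality, pick witnesses $c_i\in C\cap B_i$ not lying in any other cover member. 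Convexity of $C$ forces the medians $m(c_i,c_j,c_k)$ into $C$ and hence into some cover member $B_k$; combined with convexity of $B_k$ (so that $[c_i,c_j]\subs B_k$ whenever $c_i,c_j\in B_k$), this should contradict minimality unless $n=1$, in which case $B_1$ is the sought single element. The subtle case is a minimal cover of size two with disjoint members---genuinely possible in disconnected median spaces---which must be handled separately, invoking the normality of $\Bee$ on that disjoint pair directly and combining with the symmetric $\Bee$-cover of $R_x'$. Making the consolidation rigorous, particularly the interplay between convexity of $C$, binariness and normality of $\Bee$, is the main technical challenge.
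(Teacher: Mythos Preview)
Your reduction of the first assertion to the ``moreover'' clause is sound: once every closed convex set is an intersection of $\Bee$-members, the $\Bee$-intervals coincide with the $\Raa$-intervals and the two generated convexities agree. The gap is in the moreover clause itself.

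The cover-reduction step does not work as sketched. From a minimal cover $C\subs B_1\cup\dots\cup B_n$ with $x\notin B_i$ and witnesses $c_i\in (C\cap B_i)\setminus\bigcup_{j\ne i}B_j$, knowing that $m(c_i,c_j,c_k)$ lands in some $B_\ell$ yields no contradiction: convexity of $B_\ell$ gives only $[c_\ell,\,m(c_i,c_j,c_k)]\subs B_\ell$, which does not force any other witness into $B_\ell$. (Already in $\{0,1\}^3$ one can cover a face $C$ minimally by three proper subcubes whose three witnesses have median equal to one of the witnesses.) The reduction \emph{would} succeed if each complement $X\setminus B_i$ were convex---binarity applied to $\{C\}\cup\{X\setminus B_i:i\loe n\}$ then gives $C\subs B_i$ for some $i$ directly---but the lemma does not assume $\Bee$ consists of halfspaces, and normality of $\Bee$ does not manufacture convex complements. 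The two-element disjoint-cover case you flag as ``subtle'' is not an edge case to be patched; it is the generic obstruction.

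The paper bypasses covers entirely by passing to the \emph{gate point} of $x$ in $C$: the family $\{[x,c]:c\in C\}\cup\{C\}$ is linked, so by binarity its intersection is nonempty, and the median identity forces it to be a single point $c_0\in C$ with $c_0\in[x,c]$ for every $c\in C$. One now only needs to screen the two \emph{points} $x$ and $c_0$, which follows at once from $\Bee$ being a normal closed subbase: take $A,B\in\Bee$ with $A\cup B=X$, $x\in A\setminus B$, $c_0\in B\setminus A$. Convexity of $A$ then forces $C\cap A=\emptyset$, since $c\in C\cap A$ would give $c_0\in[x,c]\subs A$. Thus $C\subs B$ and $(A,B)$ is the required $\Bee$-screening. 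The paper also gives a direct, independent proof of the first assertion via the median (for $a,b\in C$ and $x\in I_\Bee(a,b)$, uniqueness of the median gives $x=m(a,b,x)\in I_\Raa(a,b)\subs C$), but your route through the moreover clause is equally valid once that clause is established.
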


\begin{pf}Let $\Raa$ be  a normal subbase both for the closed sets and for the convexity and
let $\Bee\subs \Raa$ be a subbase for the closed sets. The convexity $\Cee'$ generated by the
family $\Bee$ satisfies the condition $CC_2$. We check that $\Cee\subs \Cee'$. Let $C\in \Cee$. Define $I:X^2\to X$ and $I':X^2\to X$ by $I(x,y)=\bigcap\{A\in\Cee: x,y\in A\}$ and $I'(x,y)=\bigcap\{A\in\Cee': x,y\in A\}$.
It suffices to check that if $a,b\in C$ then $I'(a,b)\subs C$.
Fix $a,b\in C$ and $x\in I'(a,b)$. For every $c,d,g\in X$ the set $I(c,d)\cap I(d,g)\cap I(c,g)$ is a singleton by $CC_2$. Hence $\{m(a,b,x)\}=I(a,b)\cap I(a,x)\cap
 I(b,x)\subs I'(a,b)\cap I'(a,x)\cap I'(b,x)=\{x\}$. Thus $x=m(a,b,x)\in I(a,b)\subs C.$
 
Let $x\in X$ and let $C\subs X\setminus \{x\}$ be a closed convex set. By $CC_2$, the intersection $\bigcap
 \{[x,c]:c\in C\}\cap C$ is a singleton, say $\{c_0\}$. Again by $CC_2$, there are $A,B\in\Bee$ such that
 $x\not\in B$ and $c_0\not\in A$ and $A\cup B=X$. Finally, $C\cap A=\emptyset$, because if $c\in C\cap A$ then
$c_0\in [x,c]\subs A$, which would be a contradiction. 
\end{pf}

\section{Parovi\v{c}enko median spaces}
%\subsection{A notion of $\kappa$-Parovi\v{c}enko space}

Let $\kappa$ be a fixed infinite regular cardinal.
We shall now discuss $\kappa$-\parov\ median spaces, aiming at their internal characterization.

\begin{lm}\label{0-dim}
For every compact median space $K$ there exists a zero-dimensional compact median space $K_0$ with $w(K)=w(K_0)$ and an
epimorphism $f:K_0\twoheadrightarrow K$.
\end{lm}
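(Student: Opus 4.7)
Since $K$ is finite whenever $w(K) < \aleph_0$ (hence discrete and zero-dimensional; take $K_0 = K$), assume $\kappa = w(K) \goe \aleph_0$. The plan is to realize $K$ as a median-stable subspace of $[0,1]^\kappa$ via Theorem~\ref{ThmCntrmse}, then pull it back through a median-preserving surjection from a power of the Cantor set.

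The key ingredient is a median-preserving continuous surjection $\phi\colon C \twoheadrightarrow [0,1]$, where $C \subs [0,1]$ denotes the usual Cantor ternary set. Equip both $C$ and $[0,1]$ with their \emph{chain median}, where $\med(a,b,c)$ is the middle of the three reals; the convex sets are then the order-convex subsets. Both are compact median spaces: binarity is Helly's theorem for intervals on the line, and $CC_2$ holds because two distinct points can be separated by closed complementary half-intervals (for $C$, pick a separating point from the nowhere dense complement $[0,1]\setminus C$). Let $\phi$ be the standard projection collapsing each removed Cantor gap to its endpoints; it is continuous, surjective, and weakly order-preserving. Since the median on a chain is just the middle element, $\phi$ sends middles to middles and is therefore median-preserving.

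Form the product map $\Phi := \phi^\kappa \colon C^\kappa \twoheadrightarrow [0,1]^\kappa$. Products of compact median spaces are again compact median (with coordinate-wise median; both binarity and $CC_2$ pass to products), and since $C$ is zero-dimensional so is $C^\kappa$, with $w(C^\kappa) = \kappa$. Fix a median-stable embedding $K \subs [0,1]^\kappa$ from Theorem~\ref{ThmCntrmse}, and define $K_0 := \Phi^{-1}[K] \subs C^\kappa$. Then $K_0$ is closed (hence compact) and zero-dimensional as a subspace of $C^\kappa$. It is median-stable, because for $x,y,z \in K_0$ one has $\Phi(\med(x,y,z)) = \med(\Phi(x),\Phi(y),\Phi(z)) \in K$, so $\med(x,y,z) \in K_0$. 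The restriction $f := \Phi \rest K_0 \colon K_0 \twoheadrightarrow K$ is onto (as $\Phi$ is onto $[0,1]^\kappa \sups K$) and median-preserving, hence an epimorphism in $\mathfrak{CM}$. Finally $w(K_0) \loe w(C^\kappa) = \kappa$ and $w(K_0) \goe w(K) = \kappa$, so $w(K_0) = w(K)$.

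The only genuinely subtle point is the choice of median on $C$: the product $\{0,1\}^\omega$-median and the chain median inherited from $[0,1]$ are distinct, and only the latter makes $\phi$ median-preserving. Once this is observed, the rest of the argument is a direct verification.
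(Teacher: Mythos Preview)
Your proof is correct and follows essentially the same route as the paper: embed $K$ into $[0,1]^\kappa$ via Theorem~\ref{ThmCntrmse}, take the standard order-preserving surjection from the Cantor set (with its \emph{chain} median, not the product median) onto $[0,1]$, raise it to the $\kappa$th power, and pull $K$ back. Your write-up is in fact more thorough than the paper's, which omits the verifications that $K_0$ is median-stable and that the weights agree, and you correctly flag the one subtle point---the choice of median on $C$---that the paper also singles out.
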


\begin{pf}
By Theorem~\ref{ThmCntrmse}, we may assume that $K \subs [0,1]^\kappa$, where the cube is considered with the product median structure.
Consider the Cantor set $C = 2^\omega$ with the median operation induced from its standard linear ordering (not the product structure).
Let $\map h {C}{[0,1]}$ be the standard order preserving continuous surjection.
Then $h$ is an epimorphism of compact median spaces.
Furthermore, the power $\map {h^\kappa}{C^\kappa}{[0,1]^\kappa}$ is an epimorphism.
Finally, we may set $K_0 = (h^\kappa)^{-1}[K]$.
\end{pf}

\begin{lm}
Every $\kappa$-Parovi\v{c}enko median space is zero-dimensional.
\end{lm}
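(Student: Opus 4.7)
The plan is to establish zero-dimensionality of $P$ via the standard criterion (valid in compact Hausdorff spaces) that every pair of distinct points can be separated by a clopen set, producing such a clopen set by lifting a suitable real-valued median map on $P$ through a zero-dimensional factor via the \parov\ property.

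Fix distinct points $x, y \in P$. Proposition~\ref{vMW} yields a continuous median preserving map $g \colon P \to [0,1]$ with $g(x) = 0$ and $g(y) = 1$. The image $K := g[P]$ is a median-closed compact subset of $[0,1]$, hence a compact median space of weight at most $\aleph_0$, and the corestriction $g \colon P \twoheadrightarrow K$ is an epimorphism in $\mathfrak{CM}$. Next, Lemma~\ref{0-dim} supplies a zero-dimensional compact median space $K_0$ of weight at most $\aleph_0$ together with an epimorphism $f \colon K_0 \twoheadrightarrow K$.

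Assuming $\kappa > \aleph_0$ (so that $w(K_0), w(K) < \kappa$), the $\kappa$-\parov\ property of $P$ applied to the pair $(f, g)$ delivers an epimorphism $q \colon P \twoheadrightarrow K_0$ satisfying $f \cmp q = g$. Then $q(x) \neq q(y)$ because $f \cmp q$ already separates them, and since $K_0$ is zero-dimensional there is a clopen $V \subseteq K_0$ with $q(x) \in V$ and $q(y) \notin V$; the preimage $q^{-1}[V]$ is then a clopen subset of $P$ separating $x$ from $y$. As $x, y$ were arbitrary, clopen sets separate points of the compact Hausdorff space $P$, so $P$ is zero-dimensional.

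I do not anticipate a serious obstacle — the argument is essentially a three-step diagram chase (Urysohn-style median separation, passage to a zero-dimensional median cover, \parov\ lifting), and the weight bookkeeping lines up because Proposition~\ref{vMW} lands in a metric target. The only delicate point is the edge case $\kappa = \aleph_0$, in which the \parov\ property only controls factorizations through finite median spaces; since the section is aimed at the uncountable regime, I would either explicitly restrict to $\kappa > \aleph_0$ or dispose of this case with a short separate remark.
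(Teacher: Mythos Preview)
Your proof is correct and follows essentially the same route as the paper's: separate points via Proposition~\ref{vMW}, cover the target by a zero-dimensional median space via Lemma~\ref{0-dim}, lift via the $\kappa$-\parov\ property, and pull back a clopen set. Your explicit corestriction to the image $K = g[P]$ is a small technical improvement (the paper applies the \parov\ property with target $[0,1]$ without checking surjectivity of the Urysohn map), and your flagging of the $\kappa=\aleph_0$ edge case is apt, since the paper's argument tacitly needs $\kappa>\aleph_0$ as well.
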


\begin{pf}
Let $P$ be a $\kappa$-Parovi\v cenko median space.
It is sufficient to show that two  distinct elements of $P$ can be separated by clopen sets.
Fix $a \ne b$ in $P$.
By Proposition~\ref{vMW}, there exists a continuous median preserving map $q\colon P\to[0,1]$ such that $q(a)=0$ and $q(b)=1$.
By Lemma~\ref{0-dim}, we can find a 0-dimensional compact median space $L$ and an epimorphism $\map f L {[0,1]}$.
Let $H$ be a clopen halfspace in $L$ that separates $f^{0}$ from $f^{-1}(1)$.
As $P$ is 
$\kappa$-Parovi\v{c}enko, there exists an epimorphism $g\colon P\to L$ such that $f\circ g=q$.
The set $W= g^{-1}(H)$ is clopen in $P$ and it separates $a$ from $b$.
\end{pf}

\begin{tw}
For every infinite regular cardinal $\kappa$, there exists a $\kappa$-Parovi\v{c}enko compact median space of weight $\le\kappa^{<\kappa}$.
\end{tw}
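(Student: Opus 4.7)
The plan is to realize the statement as a special instance of Theorem \ref{ThmtwParuf} applied to the category $\mathfrak{CM}$ of compact median spaces with median-preserving continuous surjections. The forgetful functor $\map\Phi {\mathfrak{CM}}{\komp}$ is faithful, since any median-preserving map is determined by its underlying continuous function; so it suffices to verify that $\Phi$ satisfies conditions (A)--(F) and (T).

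For (A)--(F), I would invoke Theorem \ref{Thmoergoierg} with the countable first-order language $L = \{m\}$ consisting of the single ternary median symbol. This reduces the task to checking two items: stability of $\mathfrak{CM}$ under Tikhonov products and closed subalgebras, and stability under continuous epimorphisms. The Tikhonov product of compact median spaces equipped with coordinatewise median is again a compact median space, since the natural subbase built from normal binary subbases on the factors is itself normal and binary; and a closed median-stable subset $X'\subs X$ inherits the binary convexity and the $CC_2$ separation from $X$, hence is again a compact median space. For continuous epimorphisms $\qmap f K L$ with $K\in\mathfrak{CM}$, the quotient $L$ is normally supercompact with respect to the pushed-forward median: one can either transfer a normal binary subbase from $K$ along $f$, or deduce $CC_2$ for $L$ directly from Proposition \ref{vMW} and the characterization of compact median spaces via binary convexities. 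Condition (T) is immediate, taking the singleton as weakly terminal object in $\mathfrak{CM}$.

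Once these hypotheses are in place, Theorem \ref{ThmtwParuf} applied to $\Phi$ directly produces a $\kappa$-\parov\ compact median space of weight $\loe \kappa^{<\kappa}$, yielding the theorem.

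The one step requiring genuine care, rather than a routine check, is the verification that a continuous median-preserving image of a compact median space is again a compact median space in the paper's sense (i.e., normally supercompact), as opposed to merely a compact Hausdorff topological median algebra. This is folklore (cf.\ \cite{vdV}) but deserves an explicit pointer in a rigorous write-up; all the remaining verifications are routine.
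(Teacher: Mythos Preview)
Your proposal is correct and follows exactly the paper's own proof: apply Theorem~\ref{ThmtwParuf} to the forgetful functor from $\mathfrak{CM}$, checking (A)--(D), (F) via Theorem~\ref{Thmoergoierg} and noting (T) with the singleton. Your added discussion of the hypotheses of Theorem~\ref{Thmoergoierg}---in particular flagging that stability under continuous epimorphisms amounts to the (nontrivial but known) fact that a compact Hausdorff topological median algebra is normally supercompact---is a welcome expansion of what the paper leaves implicit; just be careful that invoking Proposition~\ref{vMW} for $L$ would be circular there, so the reference to \cite{vdV} is the right justification.
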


\begin{pf}
Let $\Phi$ denote the forgetful functor from the category of compact median spaces to $\komp$.
In view of Theorem \ref{Thmoergoierg}, conditions (A)--(D) and (F), (T) are satisfied, therefore the theorem follows from Theorem~\ref{ThmtwParuf}.
\end{pf}

By the theorem above combined with Theorems~\ref{ThmtwParuf} and \ref{ThmgenFraisseParow}(3), without any extra set-theoretic assumptions there exists a unique $\omega$-\parov\ compact median space $P_\omega$ of countable weight.
It is an easy exercise to check that $P_\omega$ carries the topology of the Cantor set.
On the other hand, its median structure seems to be rather complicated and not definable by any simple formula.

\subsection{Main result}

Let $P$ be a compact median space.
The collection of all clopen halfspaces in $P$ will be denoted by $H(P)$ and let $H(P)^+=H(P)\setminus\{ \emptyset \}$. 
Recall that $H(P)^+$ separates the points of $P$.
Given a family $\Aaa \subs H(P)$, it is natural to consider the following quotient space $P \by \Aaa$: Given $x, y \in P$, we define $x \sim y$ iff no member of $\Aaa$ separates $x$ from $y$.
The space $P \by \Aaa$ is formally the quotient $P \by \sim$, endowed with the quotient topology.
It is actually a median space, where the median is well-defined by the formula $m([x]_\sim, [y]_\sim, [z]_\sim) = m(x,y,z)$, $x,y,z \in P$. The canonical quotient map is obviously median preserving.
We shall use this construction in the proof below.
Note that the topological quotient space $P \by \Aaa$ can be defined as long as $\Aaa$ is a family of clopen subsets of $P$.
The fact that $\Aaa$ consists of halfspaces allows us to get a median structure on the quotient.
Note also that $P \by {H(P)} = P$.

\begin{tw}\label{characterization}
Given a compact median space $P$, the following are equivalent:
\begin{enumerate}
	\item[(a)] $P$ is a $\kappa$-Parovi\v{c}enko space.
	\item[(b)] If $f\colon P\to K$ is an epimorphism, $w(K)<\kappa$ and $E,F\subs K$ are closed convex sets such that $E\cup F=K$ then there exists  $H\in H(P)$ such that $f[H]=E$ and $f[P\setminus H]=F$.
	\item[(c)] $P$ is zero-dimensional and satisfies conditions:
       \begin{enumerate}
	            \item[(M1)] If $\Aaa,\Bee\subs H(P)$, $|\Aaa\cup\Bee|<\kappa$ and any $A\in\Aaa$ and $B\in\Bee$ are disjoint then there exists $C\in H(P)$ with  $\bigcup\Aaa\subs C$ and $\bigcup\Bee\subs P\setminus C$,
	            \item[(M2)] If $\Aaa\subs H(P)$ is linked and $|\Aaa|<\kappa$, then there exists $C\in H(P)^+$ such that $C\subs\bigcap\Aaa$,
	            \item[(M3)] for each $A\in H(P)^+$ there exists $B_0,B_1\in H(P)^+$ such that $B_0\cap B_1=\emptyset$ and $B_0\cup B_1\subs A$.
       \end{enumerate}
\end{enumerate}
\end{tw}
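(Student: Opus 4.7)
The strategy is to prove $(a)\Leftrightarrow(b)$ directly, $(a)\Rightarrow(c)$ via low-weight test spaces, and $(c)\Rightarrow(b)$ as the main technical step.

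For $(a)\Rightarrow(b)$, given $f\colon P\twoheadrightarrow K$ and closed convex $E, F\subseteq K$ with $E\cup F = K$, the set $L = (E\times\{0\})\cup(F\times\{1\})\subseteq K\times\{0,1\}$ with coordinatewise median is median-closed, hence a compact median space of weight $<\kappa$, and the projection $\phi\colon L\twoheadrightarrow K$ is an epimorphism. Applying (a) to $\phi$ and $f$ yields $q\colon P\twoheadrightarrow L$ with $\phi\circ q = f$, whence $H = q^{-1}(E\times\{0\})$ is a clopen halfspace realising (b). Conversely, for $(b)\Rightarrow(a)$: given $f\colon L\twoheadrightarrow K$ with $w(L)<\kappa$ and $p\colon P\twoheadrightarrow K$, use Lemma~\ref{0-dim} to replace $L$ by a zero-dimensional $L_0$ with epi $h\colon L_0\twoheadrightarrow L$ of the same weight, apply Lemma~\ref{lim} to $f\circ h$ to express $L_0 = \liminv\{Y_\alpha\}$ with $Y_0 = K$ and each $Y_{\alpha+1} = (E_\alpha\times\{0\})\cup(F_\alpha\times\{1\})$ a closed-convex split of $Y_\alpha$, and build epimorphisms $q_\alpha\colon P\twoheadrightarrow Y_\alpha$ by transfinite recursion starting from $q_0 = p$: at a successor stage, (b) applied to $q_\alpha$ with the cover $\{E_\alpha, F_\alpha\}$ produces a halfspace $H_\alpha$ that defines $q_{\alpha+1}(x) = (q_\alpha(x), 0)$ on $H_\alpha$ and $(q_\alpha(x), 1)$ on its complement; at limit stages the induced inverse-limit map serves. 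Composing the top map $q_\infty\colon P\twoheadrightarrow L_0$ with $h$ solves the lifting problem.

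For $(a)\Rightarrow(c)$, zero-dimensionality is the preceding lemma. (M3) follows by applying (a) to the four-point tripod median space $T = \{l, r, \ast, c\}$ (with $m(l, r, c) = \ast$) together with the epimorphism $T\twoheadrightarrow\{0, 1\}$ sending $\{l, r, \ast\}\mapsto 1$ and $c\mapsto 0$: lifting $\chi_A\colon P\twoheadrightarrow\{0, 1\}$ through this gives $g\colon P\twoheadrightarrow T$, and $B_0 = g^{-1}(l)$, $B_1 = g^{-1}(r)$ are the required disjoint nonempty clopen halfspaces of $P$ inside $A$. For (M1), set $\mathcal{D} = \Aaa\cup\Bee$ and consider the median-closed subspace $K_0\subseteq\{0,1\}^{\mathcal{D}}\times\{0,1\}$ of points $(k, c)$ subject to the rules ``$c = 1$ whenever some $A$-coordinate of $k$ is $1$'' and ``$c = 0$ whenever some $B$-coordinate of $k$ is $1$''; the pairwise disjointness $A\cap B = \emptyset$ makes the two rules consistent, and $K_0$ is median-closed because coordinatewise majority preserves each forced value. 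Lifting the natural map $\pi\colon P\twoheadrightarrow\pi[P]\subseteq\{0,1\}^{\mathcal{D}}$ through (a) and taking the preimage of $\{c = 1\}$ yields the separator required by (M1). For (M2), the analogous median-closed subspace $K_0\subseteq\{0,1\}^{\Aaa}\times\{0,1\}$ imposing ``$c = 1$ implies $k_A = 1$ for all $A\in\Aaa$'' works; the nonemptiness of $K_0\cap\{c=1\}$ that is needed to conclude $C\ne\emptyset$ after lifting is the statement $\bigcap\Aaa\ne\emptyset$, which is the Helly-type property of binary convexity applied to the linked family $\Aaa$ in the compact space $P$.

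The crux is $(c)\Rightarrow(b)$. Given $f\colon P\twoheadrightarrow K$ with $w(K)<\kappa$ and closed convex $E, F$ covering $K$, fix a normal binary subbase $\mathcal{S}$ of $K$ consisting of closed convex sets of cardinality $<\kappa$. For each $S\in\mathcal{S}$ with $S\cap F = \emptyset$ (so $S\subseteq E\setminus F$), the sets $f^{-1}(S)$ and $f^{-1}(F)$ are disjoint closed convex subsets of the zero-dimensional $P$, so Lemma~\ref{baza} provides a clopen halfspace $H_S$ with $f^{-1}(S)\subseteq H_S$ and $H_S\cap f^{-1}(F) = \emptyset$; symmetrically, for $T\in\mathcal{S}$ with $T\cap E = \emptyset$ define $G_T$ with $f^{-1}(T)\subseteq G_T$ and $G_T\cap f^{-1}(E) = \emptyset$. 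Since $\mathcal{S}$ is normal, $\bigcup_S H_S = f^{-1}(E\setminus F)$ and $\bigcup_T G_T = f^{-1}(F\setminus E)$, and the two families are pairwise disjoint and of cardinality $<\kappa$. Applying (M1) produces a clopen halfspace $C$ of $P$ with $f^{-1}(E\setminus F)\subseteq C\subseteq f^{-1}(E)$ and $f^{-1}(F\setminus E)\subseteq P\setminus C$. The main obstacle is the overlap $E\cap F$: the equalities $f[C] = E$ and $f[P\setminus C] = F$ require that every fibre $f^{-1}(y)$ with $y\in E\cap F$ meets both $C$ and $P\setminus C$, which (M1) alone does not guarantee. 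I expect to handle this by a transfinite refinement of $C$, using (M3) to split the current candidate halfspace inside the overlap region and (M2) to maintain nonemptiness of each resulting fibre intersection, with the length of the induction controlled by $w(K)<\kappa$.
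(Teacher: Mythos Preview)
Your arguments for $(a)\Leftrightarrow(b)$ and $(a)\Rightarrow(c)$ are essentially those of the paper (your test spaces for (M1)--(M3) are the same objects in different coordinates; the tripod $T$ is exactly the paper's $\lambda 3$). The genuine gap is in $(c)\Rightarrow(b)$, precisely at the point you flag yourself.

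Your proposed ``transfinite refinement of $C$'' is not a workable plan. Once a clopen halfspace $C$ has been produced by (M1), there is no mechanism for \emph{modifying} it to force a specific fibre $f^{-1}(y)$, $y\in E\cap F$, to meet both $C$ and $P\setminus C$ while preserving the halfspace property and the earlier inclusions. Any attempt to do this by re-applying (M1) with an extra pair of constraints inside $f^{-1}(y)$ produces a \emph{new} halfspace, with no relation to the previous one for the fibres already treated; so the induction does not close up unless you carry all previous constraints forward --- which simply means applying (M1) once, at the end, with the full family. That is exactly what the paper does, and the missing idea is how to assemble that family.

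The paper's device is this: choose a \emph{dense} set $D\subseteq E\cap F$ with $|D|\le w(K)<\kappa$. For each $d\in D$, first use your covering argument (the paper's Claim~1) together with (M2) to find a nonempty clopen halfspace inside $f^{-1}(d)$, and then use (M3) to split it into disjoint nonempty $H_d^0, H_d^1\in H(P)^+$ with $H_d^0\cup H_d^1\subseteq f^{-1}(d)$. Now enlarge your two families by throwing $\{H_d^0:d\in D\}$ into the $\Aaa$-side and $\{H_d^1:d\in D\}$ into the $\Bee$-side; pairwise disjointness still holds because $H_d^0, H_d^1\subseteq f^{-1}(d)\subseteq f^{-1}(E)\cap f^{-1}(F)$. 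A single application of (M1) then yields $C$ with $f^{-1}(E\setminus F)\cup\bigcup_d H_d^0\subseteq C$ and $f^{-1}(F\setminus E)\cup\bigcup_d H_d^1\subseteq P\setminus C$. Now $f[C]\supseteq (E\setminus F)\cup D$, which is dense in $E$, and $f[C]\subseteq E$; since $f[C]$ is compact, $f[C]=E$, and symmetrically $f[P\setminus C]=F$. The role of (M2) and (M3) is thus not an iterative refinement but a one-shot planting of witnesses on a dense set of overlap fibres before (M1) is invoked.
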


\begin{pf}
$(b)\Rightarrow (a)$
Let $f:K\to L$ and $q:P\to L$ be epimorphisms, where $K,L$ are compact median spaces of weight $<\kappa$.
By Lemma \ref{0-dim} we can assume that $K$ is zero-dimensional.
Using Lemma \ref{lim}, we find an inverse sequence
of compact median spaces ${\cal S} = \seq{Y_\alpha,f^\beta_\alpha,\alpha<\beta<\kappa}$
such that $K=\liminv\cal S$ and 
\begin{enumerate}
	\item $Y_0=L$;
	\item for a limit ordinal $\lam<\kappa$,
$Y_\lam=\liminv \seq{Y_\alpha:\;\alpha<\lam}$;
\item For each $\alpha<\kappa$ there exist closed convex sets
$A_\alpha,B_\alpha\subs X_\alpha$ such that $A_\alpha\cup
B_\alpha=Y_\alpha$, and 
$Y_{\alpha+1}=(A_\alpha\times\{0\})\cup(B_\alpha\times\{1\})$, and
$f^{\alpha+1}_\alpha$ is the canonical projection.
\end{enumerate}
Assume that we have already defined an epimorphism $g_\alpha\colon P\twoheadrightarrow X_\alpha$.
By condition $(b)$,  there is $H\in H(P)$ such that $f[H]=A_\alpha$ and $f[P\setminus H]=B_\alpha.$ 
Now, we define an epimorphism $g_{\alpha+1}\colon P\twoheadrightarrow Y_{\alpha+1}$ by the formula:
$$
g_{\alpha+1}(x)= 
\begin{cases}
      (g_\alpha(x),0), &\text{ if } x\in H\\
      (g_\alpha(x),1), & \text{otherwise.}

\end{cases}$$
If $\alpha<\kappa$ is a limit ordinal, let $g_\alpha$ be induced by $\{g_\beta:\beta<\alpha\}$.
Finally, the map $g:P\to K$ induced by $\{g_\beta:\beta<\kappa\}$ is an epimorphism and $f\circ g=q$.

$(c)\Rightarrow (b)$
Let $f:P\to K$ be an epimorphism with $\w(K)<\kappa$ and let $E,F\subs K$ be closed convex sets with $E\cup F=K$. Since $\w(K)=\tau<\kappa$, there exists a dense subset $D\subs E\cap F$ such that $|D|\leq \tau.$

\begin{claim}\label{cl1}
Given an open set $W\subs K$ whose complement is convex, there exists a family $\{H_\alpha:\alpha<\tau\}\subs H(P)$ such that $f^{-1}(W)=\bigcup_{\alpha<\tau} H_\alpha$, where $\tau = \w(K)$.
\end{claim}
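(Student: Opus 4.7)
My plan is to cover $W$ by at most $\tau$ convex open subsets of $K$ whose closures avoid $C:=K\setminus W$, pull each one back through $f$ to a closed convex subset of $P$, and then apply Lemma~\ref{baza} to isolate each preimage inside a clopen halfspace of $P$ disjoint from $f^{-1}(C)$.

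The first step is to produce a base $\Bee$ for the topology of $K$ of cardinality $\loe\tau$ whose members are convex in $K$. For this I would invoke Theorem~\ref{ThmCntrmse} to embed $K$ as a median-closed subspace of $[0,1]^\tau$, and then take $\Bee$ to consist of the intersections with $K$ of the standard basic boxes of $[0,1]^\tau$ whose finitely many nontrivial coordinates are rational-endpoint subintervals of the form $[0,q)$, $(q_1,q_2)$, or $(q,1]$. Such a box is a product of intervals, hence convex in the product median, so its trace on the median-closed subset $K$ is convex in $K$. A direct count gives $|\Bee|\loe\tau$, and in a compact median space the closure of a convex set is again convex, by continuity of the median operation.

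Next, since $K$ is compact Hausdorff and therefore normal, for each $x\in W$ I can pick some $V_x\in\Bee$ with $x\in V_x$ and $\ovr{V_x}\subs W$; consequently $W=\bigcup\Bee_W$ where $\Bee_W=\setof{V\in\Bee}{\ovr V\subs W}$ is a subfamily of size $\loe\tau$. Fix $V\in\Bee_W$. Then $f^{-1}(\ovr V)$ is a closed convex subset of $P$ (as $f$ is continuous and median-preserving), disjoint from the closed convex set $f^{-1}(C)$. Since $P$ is zero-dimensional by hypothesis~(c), Lemma~\ref{baza} produces a clopen halfspace $H_V\in H(P)$ with $f^{-1}(\ovr V)\subs H_V$ and $H_V\cap f^{-1}(C)=\emptyset$, so in particular $H_V\subs f^{-1}(W)$. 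Unioning over $V\in\Bee_W$ yields $f^{-1}(W)\subs\bigcup_V H_V\subs f^{-1}(W)$, giving the required representation as a union of at most $\tau$ members of $H(P)$.

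The main technical ingredient is the existence of a convex open base of size $\w(K)$, which is where the cube embedding from Theorem~\ref{ThmCntrmse} is essential; once that is available, the rest of the argument is a routine combination of normality of $K$ and the halfspace-separation Lemma~\ref{baza}.
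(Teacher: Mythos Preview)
Your argument is correct. The only ingredient worth checking is that the traces on $K$ of the rational boxes really form a convex open base of size at most $\tau$: each factor $[0,q),(q_1,q_2),(q,1]$ is an order interval in $[0,1]$, hence median-convex, so the box is convex in the product median; since $K$ is median-closed in $[0,1]^\tau$ with the restricted median, segments in $K$ are just the traces of segments in the cube, and the intersection with $K$ stays convex. Your counting and the passage to closures are fine, and the final appeal to Lemma~\ref{baza} is exactly as intended.

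The paper's proof reaches the same endpoint (Lemma~\ref{baza}) but obtains the convex cover of $W$ differently: it quotes that the closed halfspaces of $K$ form a normal subbase for both the topology and the convexity, chooses a $\tau$-sized normal subbase $\Bee$ of closed halfspaces, and then uses Lemma~\ref{half-base} to screen each point of $W$ from $K\setminus W$ by a member of $\Bee$, thereby writing $W=\bigcup_{\alpha<\tau}U_\alpha$ with each $U_\alpha$ a \emph{closed halfspace}. Your approach trades this halfspace machinery (and the external reference to \cite{WK}) for the cube embedding of Theorem~\ref{ThmCntrmse}; you get merely closed convex sets $\ovr{V}$ rather than halfspaces, but that is all Lemma~\ref{baza} needs. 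The paper's route has the mild advantage that the $U_\alpha$ already sit inside $W$ without invoking regularity; yours is more self-contained within the results stated in the paper and sidesteps Lemma~\ref{half-base} entirely.
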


\begin{pf}[Proof of Claim \ref{cl1}]
By \cite[Corollary V.1.3]{WK}, the collection $\Raa$ of closed halfspaces is a normal subbase both for the closed sets and for the convexity. Let  $\Bee\subs\Raa$ be a normal subbase for the closed sets such that $|\Bee|=\tau$.
By Lemma \ref{half-base}, $\Bee$ generates the convexity structure of $K$. Moreover, any $x\in X$ and closed convex set $C\subs X\setminus \{x\}$ can be screened with some $A,B\in\Bee$.  
Given $x\in W$, let  $A,B\in\Bee$ be such that $x\in A\subs W$ and $A\cup B=K.$  Hence, there exists a collection of closed halfspaces $\{U_\alpha:\alpha\in\tau\}$ such that 
$W=\bigcup \{U_\alpha:\alpha\in\tau\}$. By Lemma \ref{baza} applied to the closed convex disjoint sets $P\setminus f^{-1}(W)$,
 $f^{-1}(U_\alpha)$, there exists $H_\alpha\in H(P)$ such that $f^{-1}(U_\alpha)\subs H_\alpha  \subs
 f^{-1}(W)$.
Finally we get $f^{-1}(W)=\bigcup \{H_\alpha:\alpha\in\tau\}$.
\end{pf}

\begin{claim}\label{cl2}
Given $p\in K$, there are $H_0, H_1 \in H^+(P)$ such that $H_0\cap H_1=\emptyset$ and $H_0\cup H_1
\subs f^{-1}(p)$.
\end{claim}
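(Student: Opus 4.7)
The plan is to realise the fiber $f^{-1}(p)$ as the intersection of fewer than $\kappa$ clopen halfspaces of $P$, and then apply condition (M2) followed by (M3) to extract the required disjoint nonempty halfspaces inside it.

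First I would fix, as in the proof of Claim \ref{cl1}, a normal subbase $\Bee$ of closed halfspaces of $K$ with $|\Bee| = \tau := \w(K) < \kappa$, whose existence is guaranteed by Lemma \ref{half-base}. Since $\Bee$ screens any point from any disjoint closed convex set, in particular from any singleton $\{y\}$ with $y \ne p$, one gets $\{p\} = \bigcap\{B \in \Bee : p \in B\}$. For each such $B$, its complement $K \setminus B$ is open with convex complement, so Claim \ref{cl1} expresses $f^{-1}(K \setminus B)$ as a union of at most $\tau$ members of $H(P)$. Passing to complements and collecting over all $B \in \Bee$ containing $p$, the fiber $f^{-1}(p)$ becomes the intersection of a family $\Aaa \subs H(P)$ of cardinality $\loe \tau \cdot \tau = \tau < \kappa$.

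Since $f$ is surjective, $\bigcap \Aaa = f^{-1}(p) \nnempty$, so $\Aaa$ is (trivially) linked. Applying (M2) then yields $C \in H(P)^+$ with $C \subs \bigcap \Aaa \subs f^{-1}(p)$, and (M3) applied to $C$ produces the desired disjoint $H_0, H_1 \in H(P)^+$ with $H_0 \cup H_1 \subs C \subs f^{-1}(p)$.

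I do not anticipate a serious obstacle; the only conceptual step is recognising that a normal halfspace subbase of $K$ has size $<\kappa$, which is precisely what allows the preimage of a single point to be re-indexed by fewer than $\kappa$ clopen halfspaces of $P$ and thereby fed into (M2).
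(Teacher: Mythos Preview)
Your argument is correct and follows the same strategy as the paper: express $f^{-1}(p)$ as an intersection of fewer than $\kappa$ clopen halfspaces of $P$, then apply (M2) and (M3). The paper does this more directly by a single application of Claim~\ref{cl1} to $W = K\setminus\{p\}$ (the singleton $\{p\}$ is closed convex, so its complement qualifies), obtaining $f^{-1}(p)$ immediately as an intersection of $\tau$ clopen halfspaces; your detour through the subbase $\Bee$ and the individual sets $K\setminus B$ is sound but unnecessary.
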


\begin{pf}[Proof of Claim~\ref{cl2}]
Let $p\in P$. By Claim \ref{cl1}, there is a collection of clopen halfspaces
 $\{H_\alpha:\alpha\in\tau\}\subs H(P)$ such that $f^{-1}(p)=\bigcap \{H_\alpha:\alpha\in\tau\}$. By
condition $(M2)$, there is $G\in H^+(P)$ such that $G\subs \bigcap \{H_\alpha:\alpha\in\tau\}$.
Using condition $(M3)$, we find $H_1,H_0\in H^+(P)$ such that $H_0\cap H_1=\emptyset$ and $H_0\cup H_1
\subs G\subs f^{-1}(p).$ 
\end{pf}

By Claim \ref{cl1} there are $\{H^E_\alpha:\alpha<\tau\}\subs H(P)$ and $\{H^F_\alpha:\alpha<\tau\}\subs H(P)$ such that $f^{-1}(K\setminus
 E)=\bigcup\{H^F_\alpha:\alpha<\tau\}$ and $f^{-1}(K\setminus
 F)=\bigcup\{H^E_\alpha:\alpha<\tau\}$.
By Claim \ref{cl2} for each $d\in D$ there are 
 disjoint $H^1_d,H^0_d\in H(P)$ such that $H_d^0\cup H_d^1
\subs f^{-1}(d)$. Define $\Aaa=\{H^F_\alpha:\alpha<\tau\}\cup \{H^0_d:d\in D\}$ and 
$\Cee=\{H^E_\alpha:\alpha<\tau\}\cup \{H^1_d:d\in D\}$. Notice that $|\Aaa\cup\Cee|<\kappa$ and $A\cap
 C=\emptyset$ for every $A\in\Aaa$ and $C\in\Cee$. By condition $(M1)$, there is $H\in H(P)$ such that
 $\bigcup\Aaa\subs H$ and $\bigcup\Cee\subs P\setminus H.$ Since $f[\bigcup\Aaa],f[\bigcup\Cee]$ are dense subsets of $F$ and $E$ respectively, we get $f[H]=F$ and $f[P\setminus H]=E$.
 
$(a)\Rightarrow (c)$
We first show $(M1)$. 

Let $\Aaa,\Bee\subs H(P)$ be such that $|\Aaa\cup\Bee|<\kappa$ and $A\cap B=\emptyset$ for every $A\in\Aaa$ and $B\in\Bee$.
%Since $H(P)$ is a normal binary subbase for the closed sets and the convexity, we deduce that $P$ is isomorphic to $\Ult(P,H(P))$.
Let $L = P \by {(\Aaa\cup\Bee\cup \Aaa'\cup\Bee')}$, where $\Aaa'=\{P\setminus A: A\in\Aaa\}$ and $\Bee'=\{P\setminus B: B\in\Bee\}$.
Since $\Aaa\cup\Bee\cup \Aaa'\cup\Bee'\subs H(P)$ is a normal binary family, the canonical quotient mapping $q:P\to L$ is an epimorphism (i.e. it is median preserving).
Define $E=\bigcap\{L\setminus A^+:A\in\Aaa\}$ and $F=\bigcap\{L\setminus B^+:B\in\Bee\}.$ The sets $E,F$ 
are closed convex and $E\cup F=L$. Put $K=E\times\{0\}\cup F\times\{1\}$ and define an epimorphism $f:K\to L$
by $f(x,i)=x$. By condition $(a)$, there is an epimorphism $g:P\to K$ such that $q=f\circ g.$ Now we shall show that $\bigcup \Aaa\subs g^{-1}(F\times\{0\})$ and $\bigcup \Bee\subs g^{-1}(E\times\{1\}).$
Let $x\in A\in\Aaa$. Since $q(x)\in A^+\subs F\setminus E$, we get $g(x)\in F\times \{0\}$.

Next we show $(M2)$. 

Let $\Aaa\subs H(P)$ be a linked family and $|\Aaa|<\kappa$.
Put $L=P \by {(\Aaa\cup \Aaa')}$, where $\Aaa'=\{P\setminus A: A\in\Aaa\}$ and $\Aaa\cup\Aaa'$ is a normal binary family.
Let $q\colon P \twoheadrightarrow L$ be the canonical epimorphism.
Since $\Aaa$ is linked and $H(P)$ is binary we have $\bigcap\{A^+:A\in\Aaa\}\ne\emptyset$. Let $x\in \bigcap\{A^+:A\in\Aaa\}$ and $K = (L\times\{0\})\cup(\{x\}\times\{1\})$.
Define $f:K\to L$ by $f(x,i)=x$. Then $f$ is an epimorphism. By condition $(a)$ there is an epimorphism $g:P\to K$ such that $q=f\circ g$.
We shall prove that $g^{-1}(\{x\}\times\{1\})\subs \bigcap \Aaa$. Let $a\in g^{-1}(\{x\}\times\{1\})$
and suppose that $a\not\in A$ for some $A\in\Aaa$. Then $a\in P\setminus A$ and hence $q(a)\in 
(P\setminus A)^+$. This is a contradiction with $q(a)=f(g(a))=f(x,1)=x\in \bigcap\{A^+:A\in\Aaa\}$.

We now show $(M3)$. 

Let $A\in H(P)^+$. Define an epimorphism $q\colon P\twoh\{0,1\}$ by 
$$q(x)= 
\begin{cases}
      0 & \text{if }x\in A,\\
      1 & \mbox{otherwise.}

\end{cases}$$

Consider the set 
$$\lambda 3=\{\Raa: \Raa \mbox{ is a  maximal linked system in } \Pee(3)\}$$ 
equipped with the topology generated by the
family $\{A^+:A\subs 3\}$, where $A^+=\{\xi\in\lambda 3\colon A\in\xi\}$ (see~\cite[Chap.~II]{vanMill} for details). It is easy to see that
$\lambda 3=\{\xi_0,\xi_1,\xi_2,\xi_3\},$  where $\xi_i=\{A\subs\{0,1,2\}\colon i\in A\}$ for
 $i\in\{0,1,2\}$ and $\xi_3=\{\{0,1\},\{0,2\},\{1,2\},\{0,1,2\}\}$.  The space $\lambda 3$ is a compact median space with the median operation $m(x,y,z) = (x\cap y) \cup (x\cap z) \cup (y\cap z)$.
Let us define a map $f\colon \lambda 3\to\{0,1\}$ 
by  the formula:
 $$f(\xi_i)= 
\begin{cases}
      1 &\text{if } i=2,\\
      0 & \mbox{otherwise.} 
\end{cases}$$
Since $\{2\}^+=\{\xi_2\}$  is a clopen halfspace in $\lambda 3$, the map $f$ is an epimorphism.
By condition $(a)$, there is an epimorphism $g\colon P\twoh K$ such that $q=f\circ g$.
Now, we prove that $g^{-1}( \{0,1\}^+)\subs  A$. Let $a\in  g^{-1}( \{0,1\}^+)$. Then $g(a)\in\{0,1\}^+=\{\xi_0,\xi_1,\xi_3\}$. So $f(g(a))=0$, which implies that $q(a)=f(g(a))=0$.
Therefore $a\in A$, by the definition of $q$. 
It remains to note that $g^{-1}(\{0\}^+)$ and $g^{-1}(\{1\}^+)$ are clopen disjoint halfspaces contained in $A$, which completes the proof.
\end{pf}

\section{A characterization of the Continuum Hypothesis}

By Theorem~\ref{ThmgenFraisseParow}(3), if $\tau^+=2^\tau$ then there is a unique $\tau^+$-Parovi\v{c}enko space of weight $2^\tau$. We prove the converse implication by constructing two concrete examples of $\tau^+$-Parovi\v{c}enko spaces of weight
 $\le2^\tau$ which are not homeomorphic whenever $\tau^+\not =2^\tau$.

\begin{lm}\label{txt}
For every regular cardinal $\kappa$ such that $\tau^+\le\kappa\le 2^\tau$ there exists a $\tau^+$-Parovi\v{c}enko compact median space $P$ of weight $\le 2^\tau$ such that $$P=\varprojlim \seq{K_\alpha;p_\alpha^\beta;\alpha\le\beta<\kappa},$$
where 
\begin{enumerate}
\item $K_\alpha$ is a compact median space of weight $ \le 2^\tau$ for every $\alpha<\kappa$,

\item   $K_\alpha=\varprojlim \seq{K_\beta,p_\gamma^\beta;\gamma\le\beta<\alpha}$ and $p^\alpha_\beta$ is a projection from $K_\alpha$
onto $K_\beta$ for   a limit ordinal $\alpha<\kappa$, and $\beta<\alpha$,

\item $K_\alpha=K_\beta\times D^{2^\tau}$ and $p_\beta^\alpha$ is the projection on the second coordinate, for every even ordinal $\alpha=\beta+1$.
\end{enumerate}
\end{lm}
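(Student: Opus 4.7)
The plan is to build $P$ as a continuous inverse limit of length $\kappa$ in the category $\mathfrak{CM}$ of compact median spaces, interleaving two kinds of successor steps so that the product shape required by condition (3) appears automatically while the $\tau^+$-Parovi\v{c}enko property is delivered by the one-step amalgamation of Lemma~\ref{LemKwadratJeden}. Take $D=\{0,1\}$ with the majority median, so that $D^{2^\tau}$ is a compact median space of weight $2^\tau$. Lemma~\ref{LemKwadratJeden} will be applied with its cardinal parameter set to $\tau^+$; the resulting weight bound $\loe(\tau^+)^{<\tau^+}=(\tau^+)^\tau=2^\tau$ matches condition (1) on the nose.

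Starting from a one-point $K_0$, I proceed by transfinite recursion on $\alpha<\kappa$. Given $K_\beta$ with $w(K_\beta)\loe 2^\tau$: if $\beta$ is even (so $\alpha=\beta+1$ is odd), I apply Lemma~\ref{LemKwadratJeden} to $K_\beta$ to obtain $K_\alpha$ together with $p_\beta^\alpha\colon K_\alpha\twoheadrightarrow K_\beta$ satisfying the lifting property (L); if $\beta$ is odd (so $\alpha=\beta+1$ is even), I set $K_\alpha:=K_\beta\times D^{2^\tau}$ with $p_\beta^\alpha$ the canonical (median-preserving) projection; at limit $\alpha$ I use condition (C) to take the inverse limit in $\mathfrak{CM}$. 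A routine induction keeps $w(K_\alpha)\loe 2^\tau$: the product step gives $\max(w(K_\beta),2^\tau)=2^\tau$, Lemma~\ref{LemKwadratJeden} delivers $\loe 2^\tau$ directly, and at limit stages condition (C) with $\lam=(2^\tau)^+$ applies because $|\alpha|\loe\kappa\loe 2^\tau<\cf(\lam)$. Consequently $P:=\varprojlim_{\alpha<\kappa}K_\alpha$ is a compact median space of weight $\loe 2^\tau$ having the structural form demanded by (1)--(3).

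To check that $P$ is $\tau^+$-Parovi\v{c}enko, let $f\colon L\twoheadrightarrow K$ and $p\colon P\twoheadrightarrow K$ be epimorphisms with $w(L),w(K)<\tau^+$. Since $\kappa\goe\tau^+$ is regular and the sequence is continuous, the factorization property (F) furnishes $\alpha<\kappa$ and an epimorphism $p'\colon K_\alpha\twoheadrightarrow K$ with $p=p'\cmp p_\alpha$. Composing with $p_\alpha^{\alpha'}$ for an even $\alpha'\goe\alpha$ if necessary, I may assume $\alpha$ itself is even; then the lifting property (L) guaranteed at stage $\alpha+1$ by Lemma~\ref{LemKwadratJeden} produces an epimorphism $q'\colon K_{\alpha+1}\twoheadrightarrow L$ with $f\cmp q'=p'\cmp p_\alpha^{\alpha+1}$, so $q:=q'\cmp p_{\alpha+1}$ is the required lift. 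The one delicate point is that the inserted rigid product stages could a priori break this scheme, but they do not: because Lemma~\ref{LemKwadratJeden} solves all amalgamation tasks at the single odd-successor stage to which it is applied, any task produced by (F) is handled at the very next odd stage, with no extra bookkeeping. I expect this verification, together with the need to invoke condition (F) for the continuous $\kappa$-sequence in $\mathfrak{CM}$, to be the main (but essentially routine) technical point.
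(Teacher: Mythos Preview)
Your construction is essentially identical to the paper's: odd successor stages invoke Lemma~\ref{LemKwadratJeden}, even successor stages take the product with $D^{2^\tau}$, limits are handled by (C), and the $\tau^+$-Parovi\v{c}enko property is verified by factoring through an even stage and lifting at the next (odd) one; the only cosmetic difference is the choice of $K_0$ (a singleton versus the paper's $D^{2^\tau}$). One small caveat: condition (F) as formulated assumes the stages have weight $<\lambda$, which may fail here since $w(K_\alpha)$ can reach $2^\tau\goe\kappa$, but the factorization you need still holds for continuous inverse sequences in $\mathfrak{CM}$ whenever the target has weight $<\kappa=\cf(\kappa)$, and the paper's own proof uses precisely this without citing (F).
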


The same proof as in Lemma~\ref{LemKwadratJeden} works.

\begin{pf}
We define an inverse sequence $\seq{K_\alpha;p_\alpha^\beta;\alpha\le\beta<\kappa}$, where the spaces $K_\alpha$ are compact median of weight
 $\leq 2^\tau$ and the maps $p_\alpha^\beta$ are epimorphisms, in following way. Let $K_0=D^{2^\tau}$. Assume that $\alpha<\kappa$ and
  spaces $K_\beta$ for $\beta<\alpha$ and $p^\beta_\gamma$ for $\gamma\le\beta<\alpha$ are alredy defined. If $\alpha$ is a limit ordinal
let us define $K_\alpha=\varprojlim \seq{K_\beta,p_\gamma^\beta;\gamma\le\beta<\alpha}$ and $p^\alpha_\beta$ as a projection from $K_\alpha$
onto $K_\beta$ for $\beta<\alpha$. If $\alpha=\beta+1$ and $\alpha$ is an odd ordinal then we define $K_\alpha$ and $p_\beta^\alpha$
 as  in Lemma~\ref{LemKwadratJeden}.  If $\alpha=\beta+1$ and $\alpha$ is an even ordinal then we define $K_\alpha=K_\beta\times D^{2^\tau}$
 and $p_\beta^\alpha$ is a projection on second coordinate.

Let $P=\varprojlim \seq{K_\alpha;p_\alpha^\beta;\alpha\le\beta<2^\tau}$. The space $P$ is compact median 
and each projection $p_\alpha\colon P\to K_\alpha$ is an epimorhism. Obviously $w(P)=2^\tau$. 
Let $q\colon P\twoheadrightarrow L$ and $f\colon K\twoheadrightarrow L$ be epimorphisms, where $\w(K)\le\tau$. 
Since $\w(L)\le\tau$  and $\kappa$ is regular, there is an even ordinal  $\alpha<\kappa$ and an epimorphism $q'\colon K_{\alpha} \twoheadrightarrow L$ such that $q=q'
\circ p_{\alpha}$. By the construction, there is an epimorphism $p\colon K_{\alpha+1}\twoheadrightarrow K$ such that 
$q'\circ p_\alpha^{\alpha+1}=f\circ p$. It remains to note that
$$f\circ (p\circ p_{\alpha+1})=q'\circ p_\alpha^{\alpha+1}\circ p_{\alpha+1}=q'\circ p_\alpha=q,$$
which completes the proof that $P$ is a $\tau^+$-Parovi\v{c}enko space.
\end{pf}

\begin{tw}\label{pa-k}
For every regular cardinal $\kappa$ such that $\tau^+\le\kappa\le 2^\tau$ there  is a $\tau^+$-Parovi\v{c}enko compact median space $P$ of  weight $\le 2^\tau$ such that  $\chi(x,P)\ge \kappa$ for all $x\in P$.
\end{tw}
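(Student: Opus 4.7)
The plan is to take for $P$ the specific inverse limit $P = \varprojlim K_\alpha$ constructed in Lemma~\ref{txt}, and to exploit the fact that at every even successor stage $\alpha = \beta+1$ we have $K_\alpha = K_\beta \times D^{2^\tau}$ with $p_\beta^\alpha$ the canonical projection. This ``fattening'' at cofinally many stages should force character at least $\kappa$ at every point: any two distinct elements of $D^{2^\tau}$ over the same base point $p_\beta(x)$ give rise to distinct lifts in $P$, and if the local base at $x$ were too small, everything would already be determined at some bounded stage.

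The argument proceeds by contradiction. Fix $x \in P$ and suppose $\chi(x, P) < \kappa$. Since every $\tau^+$-\parov\ median space is zero-dimensional by the earlier lemma, we may choose a clopen local base $\Yu$ at $x$ with $|\Yu| < \kappa$. A standard fact about inverse limits of compact Hausdorff spaces with surjective bonding maps is that every clopen subset of $P$ has the form $p_\gamma^{-1}(V)$ for some $\gamma < \kappa$ and some clopen $V \subs K_\gamma$: each clopen of $P$ is open, hence a union of basic clopens $p_\gamma^{-1}(V)$, and also compact, hence a finite such union, which lifts to the maximum of the indices. Writing each $U \in \Yu$ as $U = p_{\alpha_U}^{-1}(V_U)$ and setting $\alpha_0 = \sup_{U \in \Yu}\alpha_U$, regularity of $\kappa$ gives $\alpha_0 < \kappa$, and by composing with the bonding maps every $U \in \Yu$ is already of the form $p_{\alpha_0}^{-1}(W_U)$ for some clopen $W_U \subs K_{\alpha_0}$.

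Now pick an even successor ordinal $\alpha = \beta + 1$ with $\alpha > \alpha_0$; these are cofinal in $\kappa$. Write $p_\alpha(x) = (p_\beta(x), z) \in K_\beta \times D^{2^\tau}$, choose any $z' \in D^{2^\tau}$ with $z' \ne z$, and, using surjectivity of $p_\alpha\colon P \twoh K_\alpha$, lift $(p_\beta(x), z')$ to a point $y \in P$. Then $p_\beta(y) = p_\beta(x)$, so $p_{\alpha_0}(y) = p_{\alpha_0}^\beta(p_\beta(y)) = p_{\alpha_0}(x)$, which forces $y \in p_{\alpha_0}^{-1}(W_U) = U$ for every $U \in \Yu$. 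Hence $y \in \bigcap \Yu = \{x\}$ (using Hausdorffness and that $\Yu$ is a local base of closed sets), contradicting $p_\alpha(y) \ne p_\alpha(x)$.

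The only nontrivial technical point is the observation that every clopen subset of the inverse limit factors through a single stage; once this is in hand, the rest is a clean regularity argument exploiting the $2^\tau$-fold ``fattening'' introduced at every even successor stage of the construction in Lemma~\ref{txt}.
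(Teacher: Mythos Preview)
Your proof is correct and follows essentially the same route as the paper: take the space from Lemma~\ref{txt}, factor a small local clopen base through a single stage $\alpha_0<\kappa$ by regularity, and then use the product ``fattening'' $K_{\beta+1}=K_\beta\times D^{2^\tau}$ at an even successor above $\alpha_0$ to exhibit a second point in the fibre $p_{\alpha_0}^{-1}(p_{\alpha_0}(x))$, contradicting $\bigcap\Yu=\{x\}$. Your presentation is in fact a bit more explicit than the paper's (you lift a concrete second point $y$ rather than arguing via cardinalities of fibres), but the underlying argument is the same.
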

\begin{pf}
By Lemma \ref{txt}, there is $\tau^+$-Parovi\v{c}enko compact median space $P$ with properties (1)--(3).

Assume that there is $x\in P$ such that $\chi(x,P)=\lambda<\kappa.$ 
Without loss of the generality, we can assume that $\Bee_x\subseteq H(P)^+$ is a subbase of
size $\lambda$ at the point $x$.
As $\cf(\kappa)=\kappa$, there is an odd ordinal $\alpha<\lambda$ such that $\Bee_x=\{(p_\alpha)^{-1}(U_\gamma):
\gamma<\lambda\mbox{ and } U_\gamma\subseteq K_\alpha\mbox{ is clopen }\}$. Therefore $\{x\}=\bigcap \{(p_\alpha)^{-1}
(U_\gamma):\gamma<\lambda\mbox{ and } U_\gamma\subseteq K_\alpha\mbox{ is clopen }\}=(p_\alpha)^{-1}\left(\bigcap\{U_\gamma:\gamma<\lambda\mbox{ and } U_\gamma\subseteq K_\alpha\mbox{ is clopen }\}\right )$.
Setting $p_\alpha(x)=x_\alpha$, we get a contradiction with $(p_\alpha)^{-1}(x_\alpha)=(p_{\alpha+1})^{-1}((p^{\alpha+1}_\alpha)^{-1}(x_\alpha) )$ and 
$|(p^{\alpha+1}_\alpha)^{-1}(x_\alpha)|=2^{2^\tau}$.
\end{pf}

\begin{tw}\label{pa-1}
There exists a $\tau^+$-Parovi\v{c}enko  compact median space $P$ of weight $2^\tau$ and $\chi(x,P)=\tau^+$ for some $x\in P$.
\end{tw}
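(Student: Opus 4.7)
The plan is to modify the construction of Lemma~\ref{txt} with $\kappa=\tau^+$, replacing the bulk product step $K_\beta\times D^{2^\tau}$ (which forces every point to have character $\ge 2^\tau$) by a localised enlargement. Specifically, I build an inverse sequence $\seq{K_\alpha,p_\alpha^\beta,\alpha\le\beta<\tau^+}$ of compact median spaces together with a distinguished thread $\bar x=(x_\alpha)_{\alpha<\tau^+}$ maintaining the invariant $\chi(x_\alpha,K_\alpha)\le\tau$ at each stage. Under this invariant, the image of $\bar x$ in $P=\varprojlim K_\alpha$ has character at most $\tau\cdot\tau^+=\tau^+$, and the reverse inequality is automatic: any point $y$ of any $\tau^+$-Parovi\v{c}enko median space with $\chi(y,P)\le\tau$ admits, by zero-dimensionality, a linked family $\Aaa\subs H(P)$ of clopen halfspaces with $|\Aaa|\le\tau$ and $\bigcap\Aaa=\{y\}$; condition (M2) of Theorem~\ref{characterization} then yields a nonempty clopen halfspace $C\subs\{y\}$, forcing $C=\{y\}$, which contradicts (M3).

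The successor stages alternate two operations. \emph{Parovi\v{c}enko absorption} follows Lemma~\ref{LemKwadratJeden}: given epimorphisms $q_\alpha\colon K_\alpha\twoheadrightarrow M_\alpha$ and $f_\alpha\colon N_\alpha\twoheadrightarrow M_\alpha$ with $\w(N_\alpha)\le\tau$, set $K_{\alpha+1}=K_\alpha\times_{M_\alpha}N_\alpha$ and $x_{\alpha+1}=(x_\alpha,n_\alpha)$ for any $n_\alpha\in f_\alpha^{-1}(q_\alpha(x_\alpha))$. Since the pullback is a subspace of $K_\alpha\times N_\alpha$ and $\chi(n_\alpha,N_\alpha)\le\w(N_\alpha)\le\tau$, we obtain $\chi(x_{\alpha+1},K_{\alpha+1})\le\tau\cdot\tau=\tau$. \emph{Enlargement} fixes a clopen halfspace $H_\alpha\ni x_\alpha$ and constructs $K_{\alpha+1}$ by amalgamating $K_\alpha$ with a compact median extension $L_\alpha$ of $H_\alpha$ of weight $2^\tau$, arranged so that the bonding map $p_\alpha^{\alpha+1}\colon K_{\alpha+1}\twoheadrightarrow K_\alpha$ restricts to a homeomorphism above $H_\alpha$; then $x_{\alpha+1}$, identified with $x_\alpha\in H_\alpha$, inherits its neighbourhood base of size $\le\tau$ from $K_\alpha$. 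At a limit stage $\lambda<\tau^+$, pass to the inverse limit; standard subbase counting gives $\chi(x_\lambda,K_\lambda)\le|\lambda|\cdot\tau\le\tau$.

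The limit $P$ is $\tau^+$-Parovi\v{c}enko by the factorisation argument of Lemma~\ref{txt} (every small-weight epimorphism from $P$ factors through some $K_\alpha$ by regularity of $\tau^+$, and is then lifted at the next absorption stage), while the enlargement stages yield $w(P)=2^\tau$. The principal technical obstacle lies in the enlargement step: producing a compact median extension $L_\alpha$ of $H_\alpha$ of weight $2^\tau$ together with a median-preserving map to $K_\alpha$ (or to a suitable quotient of $K_\alpha$) that coincides with the inclusion on $H_\alpha$. The naive candidate, namely the quotient of $K_\alpha$ collapsing $K_\alpha\setminus H_\alpha$ to a point, is not a median congruence in general, since for $a,b\in H_\alpha$ and $c\in K_\alpha\setminus H_\alpha$ the median $m(a,b,c)\in H_\alpha$ depends genuinely on $c$. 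One must therefore build $L_\alpha$ directly, for instance as a median-closed subspace of a Tikhonov cube embedding of $K_\alpha$ (available by Theorem~\ref{ThmCntrmse}) obtained by adjoining $2^\tau$ many new coordinates whose supports lie in $K_\alpha\setminus H_\alpha$; verifying that the resulting pullback is a compact median space requires the median distributivity identity.
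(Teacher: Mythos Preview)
Your approach is genuinely different from the paper's: the paper takes the $\tau^+$-Parovi\v{c}enko space $P_0$ from Lemma~\ref{txt} with $\kappa=\tau^+$, builds a strictly decreasing $\tau^+$-chain of closed convex sets $W_\alpha$ using (M2) and (M3), and then \emph{collapses} the intersection $D=\bigcap W_\alpha$ to a single point; the quotient $P=P_0/D$ is shown to be normally supercompact and to inherit (M1)--(M3), and the collapsed point has character exactly $\tau^+$. No point is tracked through the inverse-limit construction of $P_0$ itself.

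Your direct construction has a real gap, and it is not where you locate it. The enlargement step is in fact easy: with $F_\alpha=K_\alpha\setminus H_\alpha$, the set $K_{\alpha+1}=(H_\alpha\times\{\bar 0\})\cup(F_\alpha\times D^{2^\tau})$ is median-closed in $K_\alpha\times D^{2^\tau}$ (four cases, using only that $H_\alpha$ and $F_\alpha$ are convex and that $m(\bar 0,\bar 0,d)=\bar 0$), the first projection is an epimorphism that is a homeomorphism over the clopen set $H_\alpha\times\{\bar 0\}$, and $w(K_{\alpha+1})=2^\tau$. No distributivity identity is needed.

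The genuine obstacle is the absorption step. You write it as a single pullback $K_{\alpha+1}=K_\alpha\times_{M_\alpha}N_\alpha$, but Lemma~\ref{LemKwadratJeden} is not a single pullback: it is an inverse limit of length $(\tau^+)^{<\tau^+}=2^\tau$ of such pullbacks, one for every lifting datum $(M,N,q,f)$ with $q\colon K_\alpha\twoheadrightarrow M$. If you really do one pullback per absorption stage, then over $\tau^+$ stages you absorb at most $\tau^+$ lifting problems; but each $K_\alpha$ already has $2^\tau$ clopen halfspaces, hence $2^\tau$ epimorphisms onto $\{0,1\}$ alone, so in the only interesting case $2^\tau>\tau^+$ your bookkeeping cannot cover them all and $P$ fails to be $\tau^+$-Parovi\v{c}enko. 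If instead each absorption stage carries out the full Lemma~\ref{LemKwadratJeden} construction, then the distinguished thread must pass through an inverse limit of length $2^\tau$, and at sub-stage $\gamma$ the character bound is only $|\gamma|\cdot\tau$; by the end it is $2^\tau$, not $\tau$, and your invariant $\chi(x_\alpha,K_\alpha)\le\tau$ is destroyed after the very first absorption. Either reading breaks the argument. The paper's quotient trick sidesteps this tension by manufacturing the small-character point only \emph{after} the Parovi\v{c}enko space has already been built.
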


\begin{pf} 
By Lemma \ref{txt} there is a $\tau^+$-Parovi\v{c}enko compact median space $P_0$ with properties (1)--(3) for $\kappa=\tau^+$.

We define a sequence $\{W_\alpha:\alpha<\tau^+\}$ of $G_\delta$ closed convex sets of $P_0$. Let $A=\bigcap\{p_\alpha^{-1}(1):\alpha<\omega\}$.  Suppose we have already defined $\sett{W_\beta}{\beta<\alpha}$ so that if $\beta $ is a limit ordinal or  an odd ordinal  then 
 $W_\beta\subsetneq \bigcap\{W_\gamma:\gamma<\beta\}$ and if $\beta=\gamma+1$ is an even ordinal 
then $W_{\gamma+1}=(p_{\gamma+1})^{-1}(K_\gamma\times A)$.
If  $\alpha+1<\tau^+$ is an even ordinal 
then let $W_{\alpha+1}=(p_{\alpha+1})^{-1}(K_\alpha\times A)$.
If $\alpha<\tau^+$ is a limit ordinal or  an odd ordinal 
then by Theorem~\ref{characterization}~(M2),(M3) there is $W_\alpha\subsetneq \bigcap\{W_\beta:\beta<\alpha\}$ and 
$W_\alpha\in H(P)^+$. Let  $D=\bigcap \{U_\alpha:\alpha<\tau^+\mbox{ and } \alpha \mbox{ is an odd ordinal }\}$ and $P=P_0/D$.
Note that $\Int D=\emptyset$.
Suppose that $\Int D\ne\emptyset$.
There exists an odd ordinal $\alpha<\tau^+$ and a clopen set $U\subset K_\alpha$ such that $(p_\alpha)^{-1}(U)\subseteq D\subset (p_{\alpha+1})^{-1}(K_\alpha\times A)$.
Hence $(p^{\alpha+1}_\alpha)^{-1}(U)\subseteq K_\alpha\times A $, but this is a contradiction 
with $\emptyset=\Int A\subseteq D^{2^\tau}$.

We shall show that $P$ is a $0$-dimensional normally supercompact space. Let $q\colon P_0\to  P_0/D$ be a quotient map. 
Let $\Bee=\{q[H]:D\subseteq H\mbox{ or }D\cap H=\emptyset \mbox{ or }H\in H(P)^+\}$.
It is easy to show that $\Bee$ consists 
of clopen sets in $P$ and separates points, hence $\Bee$ is a subbase for $P$. If $\Pee\subseteq \Bee$ is a linked family then
$\{q^{-1}(H):H\in\Pee\}$ is the linked family.  Thus $P$ is a supercompact space. Since $\Bee$ consists of clopen sets, $P$ is a
normally supercompact space. 

Now, we shall show that $P$ satisfies conditions (M1)-(M3).

(M1): 	       
Assume that  $\Aaa,\Pee\subseteq H(P)$, $|\Aaa\cup\Pee|<\tau^+$ and any $A\in\Aaa$
and $B\in\Pee$ are disjoint. If there is $C\in\Aaa\cup\Pee$ such that  $q[D]\in C$ then it easy to see that there is 
$H\in H(P)^+$ such that $D\subseteq H$ and $\bigcup\Aaa\subseteq q[H]$ and $\bigcup\Pee\subseteq P\setminus q[H].$
If $q[D]\not\in\Aaa\cup\Pee$ then $D\cap V=\emptyset$ for $V\in\{q^{-1}(W):W\in\Aaa\cup\Pee\}$. There  exists 
an odd ordinal $\alpha<\tau^+$ such that $W_\alpha \cap V=\emptyset$ for $V\in\{q^{-1}(W):W\in\Aaa\cup\Pee\}.$
By the condition (M1) for $P$ there exists $H\in H(P)^+$ such that $\bigcup\{q^{-1}(W):W\in\Aaa\}\cup \{W_\alpha\}\subseteq H$ 
and $\bigcup\{q^{-1}(W):W\in\Pee\}\subseteq P\setminus H.$ Thus $D\subset H$.

(M2):	        
Assume that $\Aaa\subseteq H(P)$ is a linked family and $|\Aaa|<\tau^+$, then $\{q^{-1}(W):W\in\Aaa\}$ is a linked family. Hence
there exists $C\in H(P_0)^+$ such that $C\subseteq \bigcap\{q^{-1}(W):W\in\Aaa\}$. If $D\cap \bigcap\{q^{-1}(W):W\in\Aaa\}=\emptyset$
then $q[C]\in H(P)^+$, otherwise there exists an odd ordinal $\alpha<\tau^+$ such that $W_\alpha\subsetneq \bigcap\{q^{-1}(W):W\in\Aaa\}$. Therefore $\bigcap\{q^{-1}(W):W\in\Aaa\}\cap(P_0\setminus W_\alpha)\ne\emptyset$ and we can find 
$C\in H(P_0)^+$ such that $C\subseteq \bigcap\{q^{-1}(W):W\in\Aaa\}\cap(P_0\setminus W_\alpha)\ne\emptyset$, so $q[C]\in H(P)^+$
and  $q[C]\subseteq\bigcap\Aaa$.

(M3):	         
Let  $A\in H(P)^+$. Since $\Int D=\emptyset$, we have $q^{-1}(A)\ne D$. There exists an odd ordinal $\alpha<\tau^+$ such that
 $q^{-1}(A)\setminus W_\alpha\ne\emptyset$.  Since $W_\alpha\in H(P_0)^+$, there exists $B_0,B_1\in H(P_0)^+$ such that $B_0\cap B_1=\emptyset$ and $B_0\cup B_1\subseteq q^{-1}(A)\setminus W_\alpha\ne\emptyset$. It easy to see that $\chi(q[D],P)\le\tau^+$. Since the sequence  $\{W_\alpha:\alpha<\tau^+\mbox{ and } \alpha \mbox{ is an odd ordinal }\}$
  is strictly decreasing we have  $\chi(q[D],P)=\tau^+$. This completes the proof.
\end{pf}

%By Theorem \ref{pa-1} and Theorem \ref{pa-k}, we get the following result.

\begin{wn}\label{frank}
If every two $\tau^+$-Parovi\v{c}enko spaces of weight $\le 2^\tau$ are homeomorphic then  $2^\tau=\tau^+$.
\end{wn}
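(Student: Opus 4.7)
The plan is to prove the contrapositive: assuming $\tau^+ < 2^\tau$, construct two $\tau^+$-\parov\ compact median spaces of weight $\le 2^\tau$ which cannot be homeomorphic. The distinguishing invariant will be the character $\chi(x,P)$ at points $x\in P$.

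First, because $\tau^+ < 2^\tau$ and cardinals are linearly ordered, we have $\tau^{++}\le 2^\tau$. The cardinal $\tau^{++}$ is regular as a successor cardinal, so the pair $\tau^+\le\tau^{++}\le 2^\tau$ fits the hypothesis of Theorem~\ref{pa-k}. Applying that theorem with $\kappa=\tau^{++}$ yields a $\tau^+$-\parov\ compact median space $P$ of weight $\le 2^\tau$ such that $\chi(x,P)\ge \tau^{++}$ for every $x\in P$.

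Next, Theorem~\ref{pa-1} supplies a $\tau^+$-\parov\ compact median space $P'$ of weight $2^\tau$ together with a point $x_0\in P'$ at which $\chi(x_0,P')=\tau^+$. Both $P$ and $P'$ have weight $\le 2^\tau$, and both are $\tau^+$-\parov. However, any homeomorphism $h\colon P'\to P$ would send $x_0$ to a point of $P$ with character $\tau^+$, contradicting the fact that every point of $P$ has character at least $\tau^{++}>\tau^+$. Hence $P$ and $P'$ are non-homeomorphic, which negates the hypothesis of the corollary. Therefore $\tau^+=2^\tau$, as required.

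There is essentially no obstacle beyond quoting the two previous theorems; the only conceptual point is picking the right regular cardinal between $\tau^+$ and $2^\tau$ to apply Theorem~\ref{pa-k}, and this is exactly $\tau^{++}$ when $\tau^+<2^\tau$.
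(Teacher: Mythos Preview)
Your proof is correct and follows essentially the same approach as the paper: both argue by contraposition, apply Theorem~\ref{pa-k} with $\kappa=(\tau^+)^+=\tau^{++}$ to obtain a space all of whose points have character $\ge\tau^{++}$, and then invoke Theorem~\ref{pa-1} to obtain a second space with a point of character exactly $\tau^+$, concluding that the two cannot be homeomorphic.
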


\begin{pf}
Suppose that $2^{\tau}>\tau^+$, then by Theorem~\ref{pa-k} for $(\tau^{+})^{+}\le 2^\tau$ there exists a
$\tau^+$-Parovi\v{c}enko compact median space $P$ of weight $\le 2^\tau$ such that  $\chi(x,P)\ge (\tau^{+})^{+}$ for all $x\in P$.
By Theorem~\ref{pa-1}, there exists a $\tau^+$-Parovi\v{c}enko  compact median space $P_1$ of weight $2^\tau$ and $\chi(x,P_1)=\tau^+$ for some $x\in P_1$; hence $P$ and $P_1$ are not homeomorphic, a contradiction.
\end{pf}

\end{document}